\documentclass[a4paper,12pt]{article}

\usepackage[utf8]{inputenc}
\usepackage[francais,english]{babel}
\usepackage{lmodern}
\usepackage[T1]{fontenc}

\usepackage{hyperref}
\usepackage{graphicx}
\usepackage{amsmath}
\usepackage{amsfonts}
\usepackage{amssymb}
\usepackage{amsthm}
\usepackage{xcolor}
\usepackage{pdfsync}
\usepackage{datetime}

\usepackage[text={7in,10in},centering]{geometry}
\usepackage[margin=60pt,format=plain,font=footnotesize,labelsep=period]{caption}

\newcommand{\bpm}{\begin{pmatrix}}
\newcommand{\epm}{\end{pmatrix}}

\theoremstyle{definition}
\newtheorem{conjecture}{{\bf Conjecture}}
\newtheorem{theorem}{{\bf Theorem}}

\newtheorem{remark}{{\bf Remark}}
\newtheorem{lemma}{{\bf Lemma}}

\newcommand{\real}{\mathbb{R}}

\newcommand{\N}{\mathbb{N}}
\newcommand{\mb}{\mathbf}

\newcommand{\mbK}{\mathbf{K}}

\newcommand{\mc}{\mathcal}

\newcommand{\tmy}{\textrm{y}}

\newcommand{\rp}{\text{Re}}
\newcommand{\ip}{\text{Im}}

\newcommand{\ls}{\leqslant}
\newcommand{\gs}{\geqslant}

\newcommand{\hh}{\hspace{1cm}}
\newcommand{\hd}{\hspace{5mm}}

\newcommand{\red}{\textcolor{red}}

\definecolor{fgreen}{RGB}{29, 177, 0}
\definecolor{darkgreen}{rgb}{0,0.7,0}

\newcommand{\noi}{\noindent}

\newcommand{\p}{\partial}

\title{Diffusion and robustness of boundary feedback stabilization of hyperbolic systems \\[1em] \Large{ \textit{\`A notre maitre et ami Eduardo}}}

\author{Georges Bastin\thanks{Department of Mathematical Engineering, ICTEAM, UCLouvain, Louvain-La-Neuve, Belgium. georges.bastin@uclouvain.be} \,,\, Jean-Michel Coron\thanks{Laboratoire Jacques-Louis Lions, Sorbonne Universit\'{e}, Universit\'{e} de Paris, CNRS, INRIA, \'{e}quipe Cage, Paris, France. jean-michel.coron@sorbonne-universite.fr}\,\, and Amaury Hayat\thanks{CERMICS, \'{E}cole des Ponts ParisTech, 6 - 8, Avenue Blaise Pascal, Cit\'{e} Descartes - Champs sur Marne, 77455 Marne la Vall\'{e}e, France. amaury.hayat@enpc.fr.}}

\date{\empty}

\begin{document}

\maketitle

\abstract{We consider the problem of boundary feedback control of single-input-single-output (SISO) one-dimensional linear hyperbolic systems when sensing and actuation are anti-located. The main issue of the output feedback stabilization is that it requires dynamic control laws that include delayed values of the output (directly or through state observers) which may not be robust to infinitesimal uncertainties on the characteristic velocities. The purpose of this paper is to highlight some features of this problem by addressing the feedback stabilization of an unstable open-loop system which is made up of two interconnected transport equations and provided with anti-located boundary sensing and actuation. The main contribution is to show that the robustness of the control against delay uncertainties is recovered as soon as an arbitrary small diffusion is present in the system. Our analysis also reveals that the effect of diffusion on stability is far from being an obvious issue by exhibiting an alternative simple example where the presence of diffusion has a destabilizing effect instead.}

\section{Introduction}

The output feedback stabilization of single-input-single-output (SISO) one-dimensional linear hyperbolic systems is a subject that has been widely studied in the scientific literature since the nineties when both actuation and sensing are located at the boundaries. In the case where the control input and the measured output are co-located at the same boundary, the problem is now relatively well understood and has given rise to numerous publications,  both in the linear case \cite{Aam13, AnfAam17b} and in the nonlinear case \cite{BasCor14, Gug15}, in particular in fluid mechanics for Saint-Venant equations \cite{SanPri08, BasCor14, HaySha19} or Euler equations \cite{GugLeuTam12}, to name just a few of the many publications on the subject.

In contrast, when the actuator acts through one boundary, whereas the sensor is placed at the other boundary, the output feedback stabilization problem can become much more complicated and remains largely unexplored in the literature. As Krstic et al. pointed out in \cite{KrsGuoBal08}, the difficulty arises from the fact that ``the input-output operator is no longer passive (...) which precludes the application of simple controllers''. Anti-located sensing and actuation requires to use dynamic compensators that include delayed values of the output (directly or through state observers). A meaningful example is the output feedback stabilization of a simple unstable wave equation addressed in \cite{KrsGuoBal08} using a separation principle that combines a state feedback control with a state observer. This approach is extended to the adaptive stabilization of more general linear hyperbolic systems with unknown parameters in \cite{AnfAam17} and \cite{BerKrs14b}. Recently, an experimental application to the control of hydraulic waves is reported in \cite{WurMayWoi21}, where the actuation is provided by a moving boundary while the water level is measured at the other boundary.

We can also mention references \cite{AurDiMeg, HuMegVaz15} which deal with MIMO systems with anti-located multivariable sensors and actuators for $n \times n$ linear hyperbolic systems expressed in a characteristic form having a very specific input/output structure.

It is important to note that the use of control laws containing delayed output feedback or state observers can however prove to be problematic because it can be strongly sensitive to uncertainties on the characteristic velocities of the plant model \cite{MicNic07, Fri14, ChiMazSig16}. This happens when the control design relies on a (supposed) exact knowledge of some characteristic velocities that must be exactly compensated in the control law such that the stability can be destroyed by arbitrarily small modelling uncertainties.

In this paper, our purpose is to highlight some features of this problem by addressing the feedback stabilization of an unstable open-loop system which is made up of two interconnected transport equations and provided with anti-located boundary sensing and actuation.

Our paper is organized as follows. The control problem is described in Section \ref{sectioncontrolproblem}. It is first shown that the considered  control system is open loop unstable and cannot be stabilized by a simple proportional output feedback.  Then, it is shown that the system can be stabilized by a dynamic controller that involves a delayed output feedback. However, this control turns out not to be robust with respect to delay uncertainties precisely because the control requires a (utopian) exact knowledge of the transport velocity.

The main contribution of this paper is to show that the robustness of the control against delay uncertainties is recovered as soon as an arbitrary small diffusion is present in the system. For that purpose, in Section \ref{sectionopenloop}, it is first assumed that the considered plant is subject to a slight phenomenon of diffusion, interpreted as a viscosity and the corresponding (unstable) input-output transfer function is computed. Then in Sections \ref{sectionclosedloop} and \ref{sectionrobustness}, we show that the dynamic output (non robust) feedback designed for the inviscid case also stabilizes exponentially the viscous system when the (unknown) diffusion is small, and that, in this case, the control proves to be perfectly robust, even if the diffusion is almost negligible. Interestingly, an upper bound on the decay rate appears when adding a small viscosity, and this upper bound is uniform with respect to the diffusion parameter $\eta$ when it is small, while for the unperturbed system with $\eta=0$ the decay rate is infinite (the system is finite-time stable).

Our analysis in Sections \ref{sectionclosedloop} and \ref{sectionrobustness} also reveals that the effect of diffusion on stability is far from being an obvious issue, contrary to what one might expect. It is indeed well known that, in hyperbolic systems, the presence of diffusion (or friction) can have a destabilizing as well as a stabilizing effect (see for instance the references \cite{FraCol15, Tur52}). This issue is further discussed in Section \ref{sectionscalar} where we present an example of another simple hyperbolic system which simplifies the previous case and for which, however, the same diffusion term destroys the stability instead of strengthening it.

Some final conclusions are given in Section \ref{sectionconclusion}.

\section{Description of the control problem} \label{sectioncontrolproblem}

We consider the open loop control system represented in Figure \ref{openlooptransport}. The system is made up of the positive feedback interconnection of two identical transport systems. The system dynamics are described in the time domain by the following equations:
\begin{subequations} \label{invisys}
\begin{gather}
\p_t y_1(t,x) + \upsilon \p_x y_1(t,x) = 0, \label{invisys1}\\[0.5em]
\p_t y_2(t,x) + \upsilon \p_x y_2(t,x) = 0, \label{invisys2}\\[0.5em]
y_1(t,0) = y_2(t,1) + U(t), \label{bcinvisys1}\\[0.5em]
y_2(t,0) = y_1(t,1), \label{bcsys2}\\[0.5em]
Y(t) = y_1(t,1). \label{bcinvisys4}
\end{gather}
\end{subequations}
where $U(t)$ is the control input and $Y(t)$ is the measurable output. In the classical pure transport equations \eqref{invisys1} and \eqref{invisys2}, the parameter $\upsilon > 0$ denotes the transport velocity.
\begin{figure}[ht]
  \includegraphics[width=0.60\textwidth]{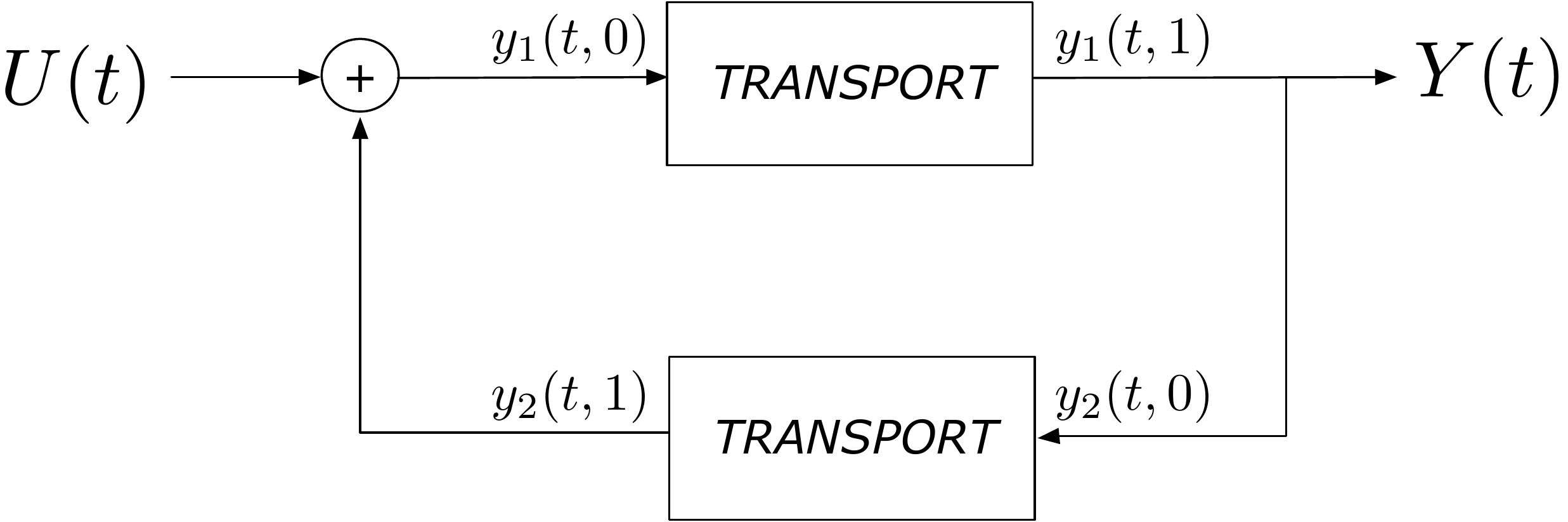}
  \centering
  \caption{The structure of the open loop control system considered in this paper.}
  \label{openlooptransport}
\end{figure}

In the frequency domain, it is well known that the transfer function of the transport systems \eqref{invisys1} and \eqref{invisys2} is
\begin{equation} \label{tfdelay}
f_o(s) = e^{-s\tau} \hd \text{with the time delay} \hd \tau = \dfrac{1}{\upsilon},	
\end{equation}
where $s$ denotes the Laplace complex variable.

It follows that the overall input-output transfer function of the open loop system \eqref{invisys} is
\begin{equation}
G(s) = \dfrac{\textrm{Y}(s)}{\textrm{U}(s)} = \dfrac{f_o(s)}{1 - f_o^2(s)} = \dfrac{e^{s\tau}}{e^{2s\tau} - 1},	\label{transferinviscid} 	
\end{equation}
where $\textrm{Y}(s)$ and $\textrm{U}(s)$ denote the Laplace transforms of the output $Y(t)$ and the input $U(t)$ respectively.

The poles of the system are the roots of the characteristic equation
\begin{equation}
 e^{2s\tau} - 1 = 0.	
\end{equation}
This open loop control system is clearly not asymptotically stable since all the poles are  located on the imaginary axis.

In order to illustrate the challenge that arises when actuation and sensing are anti-located, we shall first show that, despite its apparent simplicity, this unstable system cannot be stabilized with a simple proportional output feedback, i.e. with a static proportional controller of the form
\begin{equation}
U(t) = - 2k_p Y(t)	\label{propcontrol}
\end{equation}
where $k_p \neq 0$ is a control tuning parameter.

In the frequency domain, for the system \eqref{transferinviscid} with the control law \eqref{propcontrol} the characteristic equation of the closed loop system is:
\begin{equation}
e^{2s\tau} +  2k_pe^{s\tau} - 1 = 0.
\end{equation}
Solving this equation for $e^{s\tau}$, we get
\begin{equation}
e^{s\tau} = -k_p \pm \sqrt{1 + k_p^2}.
\end{equation}
Then for any $k_p \neq 0$ there is an infinity of system poles $\sigma + i\omega$ lying on two vertical lines with real parts:
\begin{equation}
\sigma =  \upsilon \ln \left( \sqrt{1 + k_p^2} + |k_p| \right) > 0 \hd \text{and} \hd \sigma = \upsilon \ln \left( \sqrt{1 + k_p^2} - |k_p| \right) < 0.
\end{equation}
It follows that the unstable system \eqref{transferinviscid} cannot be stabilized with the static controller \eqref{propcontrol}.

Let us now show that the  system can actually be stabilized by a dynamic controller that involves a delayed output feedback.

From \eqref{transferinviscid}, it follows that the input-output dynamics of the system \eqref{invisys} in the time domain can alternatively be represented by the delay-difference equation
\begin{equation}
Y(t) - Y(t-2\tau) =  U(t-\tau). \label{inoutinviscid}
\end{equation}
A simple and natural candidate for a stabilizing feedback control law is then
\begin{equation} \label{dyncontrol}
U(t) = -2k_1 Y(t) - k_2Y(t-\tau)	
\end{equation}
where $k_1$ and $k_2$ are control tuning parameters.
With this control law, the closed loop dynamics are
\begin{equation} \label{closinviscid}
	Y(t) + 2k_1 Y(t-\tau) +  (k_2 - 1)Y(t-2\tau) = 0.
\end{equation}
Clearly, we can conclude that the controller \eqref{dyncontrol} exponentially stabilizes the  system \eqref{invisys} if the tuning parameters $k_1$ and $k_2$ are selected such that the roots of the polynomial
\begin{equation} \label{poly}
w^2 + 2k_1 w + (k_2 - 1)
\end{equation}
 are located inside the unit circle. However, this should be considered with caution because it is well known that boundary feedback stabilization of hyperbolic systems with delayed control may be sensitive to delay modeling errors (see e.g. \cite{MicNic07}). To clarify this point we consider the time domain representation of the dynamical control law \eqref{dyncontrol} defined as
\begin{gather}
 \label{dyncontrol2}
\begin{split}
&\p_t \hat y_2(t,x) + \upsilon \p_x \hat y_2(t,x) = 0,\\
&\hat y_{2}(t,0)= y_{1}(t,1),
\end{split}\\[0.5em]
U(t) = -2 k_1 y_1(t,1) - k_2 \hat y_2(t,1), \label{dyncontrol3}
\end{gather}
where the transport equation \eqref{dyncontrol2} with transport velocity $\upsilon = 1/\tau$ is equivalent to  the time delay $\tau$ of the control law \eqref{dyncontrol}. With this definition, the closed loop system \eqref{invisys}, \eqref{dyncontrol2}, \eqref{dyncontrol3} is then represented as follows:
\begin{subequations} \label{closinvisys}
\begin{gather}
\p_t y_1(t,x) + \upsilon \p_x y_1(t,x)= 0, \label{closinvisys1}\\[0.5em]
\p_t y_2(t,x) + \upsilon \p_x y_2(t,x) = 0, \label{closinvisys2}\\[0.5em]
\p_t \hat y_2(t,x) + \upsilon \p_x \hat y_2(t,x) = 0, \label{closinvisys3}\\[0.5em]
\bpm y_1(t,0) \\[0.5em] y_2(t,0) \\[0.5em] \hat y_2(t,0) \epm = \underbrace{\bpm -2k_1 & 1 & - k_2 \\[0.5em] 1 & 0 & 0 \\[0.5em]\ 1 & 0 & 0\epm }_{\displaystyle \mbK} \bpm y_1(t,1) \\[0.5em] y_2(t,1) \\[0.5em] \hat y_2(t,1) \epm. \label{closinvisysbc}
\end{gather}
\end{subequations}

Now, as proved in Appendix \ref{appendixrho}, for the matrix $\mbK$ defined in \eqref{closinvisysbc}, it can be shown that
\begin{equation}
	 \bar \rho(\mbK) = |k_1| + \sqrt{1 + k_1^2 + |k_2|} \; \gs 1 \hd \text{for all } (k_1, k_2)\in\mathbb{R}^{2},
\end{equation}
where $\bar \rho(\mbK)$ is defined  as follows:
\begin{equation}
\bar \rho(\mbK) :=\max \{ \rho(\text{diag}\!\left\{e^{-i\theta_1}, e^{-i\theta_2},  e^{-i\theta_3}\right\}\mbK) ; (\theta_1, \theta_2 , \theta_3)^{\! \mathsf{T}} \in \real^3 \},
\end{equation}
 $\rho(M)$ denoting the spectral radius of the matrix $M$.
By \cite{Sil76} (see also \cite[Chapter 9, Theorem 6.1]{HalVer93} and \cite[Chapter 3]{BasCor14}), we know that $\bar \rho(\mbK) < 1$ is a necessary (and sufficient) condition to have a stability which is robust against small uncertainties in the characteristic velocities. Although the ideal closed loop system \eqref{closinvisys} is exponentially stable (with all the poles strictly located in the left half complex plane provided $k_{1}$ and $k_{2}$ are chosen accordingly), the stability can be destroyed by an arbitrarily small difference in characteristic velocities between the plant equations \eqref{closinvisys1}, \eqref{closinvisys2} and the controller equation \eqref{closinvisys3}. More precisely, if we assume that the physical transport velocities are $\upsilon + \varepsilon_1$, $\upsilon + \varepsilon_2$ with $\varepsilon_1, \varepsilon_2$ representing uncertainties in the plant equations \eqref{closinvisys1}, \eqref{closinvisys2} rewritten as follows:
\begin{subequations} \label{closinvisysperturb}
\begin{gather}
\p_t y_1(t,x) + (\upsilon + \varepsilon_1) \p_x y_1(t,x)= 0, \label{closinvisys1perturb}\\[0.5em]
\p_t y_2(t,x) + (\upsilon + \varepsilon_2) \p_x y_2(t,x) = 0, \label{closinvisys2perturb}
\end{gather}
\end{subequations}
then the closed loop system \eqref{closinvisys} with \eqref{closinvisys1}, \eqref{closinvisys2} replaced by \eqref{closinvisys1perturb}, \eqref{closinvisys2perturb} may become unstable, with poles moving to the right half complex plane even for arbitrarily small $\varepsilon_i$  perturbations.

Should this necessarily mean that the control law \eqref{dyncontrol} with delayed feedback could not be applied in practice? Our objective, in this paper, is exactly to prove the opposite! Indeed, our main contribution will be to show that, even with the simple control law \eqref{dyncontrol}, the robustness of the output feedback stabilization against delay uncertainties can be recovered as soon as an arbitrary small diffusion is present in the system. Our analysis will also reveal, however, that the effect on stability of adding an arbitrarily small diffusion is far from obvious, contrary to what one might expect. Indeed, while diffusion strengthens the robustness of the exponential stability for the $2\times 2$ problem \eqref{closinvisysperturb}, it can also destroy the stability of similar simpler systems as we will see in Section \ref{sectionscalar}.

We shall consider the special case of a dead beat control where $k_1 = 0$ and $k_2 = 1$. In that case the characteristic equation of the closed loop inviscid system reduces to $e^{2s\tau} = 0$, meaning that all the poles of the system have negative real parts that are moved off to infinity. However, for that system we have
\begin{equation}
\bar \rho(\mbK) = \sqrt{2}	
\end{equation}
showing a strict lack of robustness of the control w.r.t. delay inaccuracy.

\section{The open loop control system with diffusion}\label{sectionopenloop}

We consider again a control system as represented in Figure \ref{openlooptransport}. However, we assume here that the two transport systems are subject to a slight phenomenon of diffusion.  The system is therefore made up of the feedback interconnection of two identical transport systems that are perturbed by a diffusion term which can be interpreted for example as a viscosity in the case of a fluid. For simplicity and without loss of generality, we assume a unit nominal transport velocity $\upsilon = 1$. The dynamics of the open loop control system are therefore described in the time domain by the following equations:
\begin{subequations} \label{unsys}
\begin{gather}
\p_t y_1(t,x) + \p_x y_1(t,x) - \eta \p^2_{xx} y_1(t,x) = 0, \label{unsys1}\\[0.5em]
\p_t y_2(t,x) + \p_x y_2(t,x) - \eta \p^2_{xx} y_2(t,x) = 0, \label{unsys2}\\[0.5em]
y_1(t,0) = y_2(t,1) + U(t), \label{bcunsys1}\\[0.5em]
y_2(t,0) = y_1(t,1), \label{bcunsys2}\\[0.5em]
\p_x y_1(t,1) = \p_x y_2(t,1) = 0 \label{bcunsys3}\\[0.5em]
Y(t) = y_1(t,1). \label{bcunsys4}
\end{gather}
\end{subequations}
As above $U(t)$ is the control input and $Y(t)$ is the measurable output while $\eta > 0$ is the viscosity coefficient.

In the frequency domain, with $\tmy_{\!1}(s,x)$ and $\tmy_{2}(s,x)$ denoting the Laplace transforms of $y_{1}(t,x)$ and $y_{2}(t,x)$,  the system is written
\begin{subequations} \label{fsys}
\begin{gather}
s\tmy_{\!1}(s,x) + \p_x \tmy_{\!1}(s,x) - \eta \p^2_{xx} \tmy_{\!1}(s,x) = 0, \label{fsys1}\\[0.5em]
s\tmy_2(s,x) + \p_x \tmy_2(s,x) - \eta \p^2_{xx} \tmy_2(s,x) = 0, \label{fsys2}\\[0.5em]
\tmy_{\!1}(s,0) = \tmy_2(s,1) + \textrm{U}(s), \label{bcfsys1}\\[0.5em]
\tmy_2(s,0) = \tmy_{\!1}(s,1), \label{bcfsys2}\\[0.5em]
\p_x \tmy_{\!1}(s,1) = \p_x \tmy_2(s,1) = 0 \label{bcfsys3}\\[0.5em]
\textrm{Y}(s) = \tmy_{\!1}(s,1). \label{bcfsys4}
\end{gather}
\end{subequations}
For any value of s, the solutions of the differential equations \eqref{fsys1}, \eqref{fsys2} are written
\begin{equation}
\tmy_i(s,x)= A_i(s) e^{\lambda_1(s)x} + B_i(s) e^{\lambda_2(s)x}, \hd i= 1,2,\label{solution}
\end{equation}
where $\lambda_1(s)$, $\lambda_2(s)$ are the roots of the polynomial
\begin{equation}
\eta \lambda^2 - \lambda - s = 0,	
\end{equation}
which implies that
\begin{equation} \label{lambda12}
\lambda_1(s) = \dfrac{1 +\sqrt{1 + 4 \eta s}}{2\eta}, \hd \lambda_2(s) = \dfrac{1 -\sqrt{1 + 4 \eta s}}{2\eta}.
\end{equation}
In \eqref{lambda12} and in the following $\sqrt{\cdot}$ denotes the principal value of the square root, which is well defined except when $1 + 4 \eta s\in \mathbb{R}_{-}$; however this particular case implies that $s$ is real and $s\leq -1/4\eta$ and therefore is negative and converges to $-\infty$ when $\eta\rightarrow 0^{+}$. We will see later on that this case can be considered separately.
Using the solution \eqref{solution} and the boundary condition \eqref{bcfsys3}, we have for each $i=1,2$,
\begin{gather}
A_i(s) + B_i(s) = \tmy_i(s,0), \\[0.5em]
\lambda_1(s)A_i(s)e^{\lambda_1(s)} + \lambda_2(s)B_i(s)e^{\lambda_2(s)}	= 0, \\[0.5em]
\tmy_i(s,1) = A_{i}(s)e^{\lambda_1(s)} + B_{i}(s)e^{\lambda_2(s)}.
\end{gather}
Eliminating $A_i(s)$ and $B_i(s)$ between these three equations, we get the transfer function $f_\eta(s)$ of each viscous transport system:
\begin{equation} \label{transfer}
f_\eta(s) = \dfrac{\tmy_i(s,1)}{\tmy_i(s,0)} = \dfrac{\lambda_1(s) - \lambda_2(s)}{\lambda_1(s) e^{-\lambda_2(s)} - \lambda_2(s) e^{-\lambda_1(s)}}.
\end{equation}
Remark that in the notation $f_\eta$, we use a subscript to emphasize the dependency on the viscosity parameter $\eta$. Remark also that in the limit, in the absence of a viscosity term (i.e. $\eta=0$), we recover the transfer function \eqref{tfdelay}: $f_o(s) = e^{-s\tau}$.

It follows that the system \eqref{fsys} is equivalent to:
\begin{subequations} \label{fsystf}
\begin{gather}
\tmy_{\!1}(s,1) = f_\eta(s) \tmy_{\!1}(s,0), \label{fsystf1}\\[0.5em]
\tmy_2(s,1) = f_\eta(s) \tmy_2(s,0), \label{fsystf2}\\[0.5em]
\tmy_{\!1}(s,0) = \tmy_2(s,1) + \textrm{U}(s), \label{bcfsystf1}\\[0.5em]
\tmy_2(s,0) = \tmy_{\!1}(s,1), \label{bcfsystf2}\\[0.5em]
\textrm{Y}(s) = \tmy_{\!1}(s,1). \label{bcfsystf4}
\end{gather}
\end{subequations}
Eliminating $\tmy_i(s,0), \tmy_i(s,1), (i=1,2),$ between these equations, we get that the overall input-output transfer function of the open loop system \eqref{unsys} is
\begin{equation}
G(s) = \dfrac{\textrm{Y}(s)}{\textrm{U}(s)} = \dfrac{f_\eta(s)}{1 - f_\eta^2(s)}.	
\end{equation}
The poles of the system are the roots of the characteristic equation
\begin{equation}
f_\eta^2(s) - 1 = 0.	
\end{equation}
In particular, it can be checked that $f_\eta(0) = 1$  for all $\eta \neq 0$, which means that there is a pole at the origin. Therefore, as in the inviscid case, the open loop system \eqref{unsys} is not asymptotically stable whatever the value of the viscosity $\eta$. As a matter of illustration, in Figure \ref{spectre-open1}, we present the spectrum of the system for $\eta = 0.1$.
\begin{figure}[t]
  \includegraphics[width=0.60\textwidth]{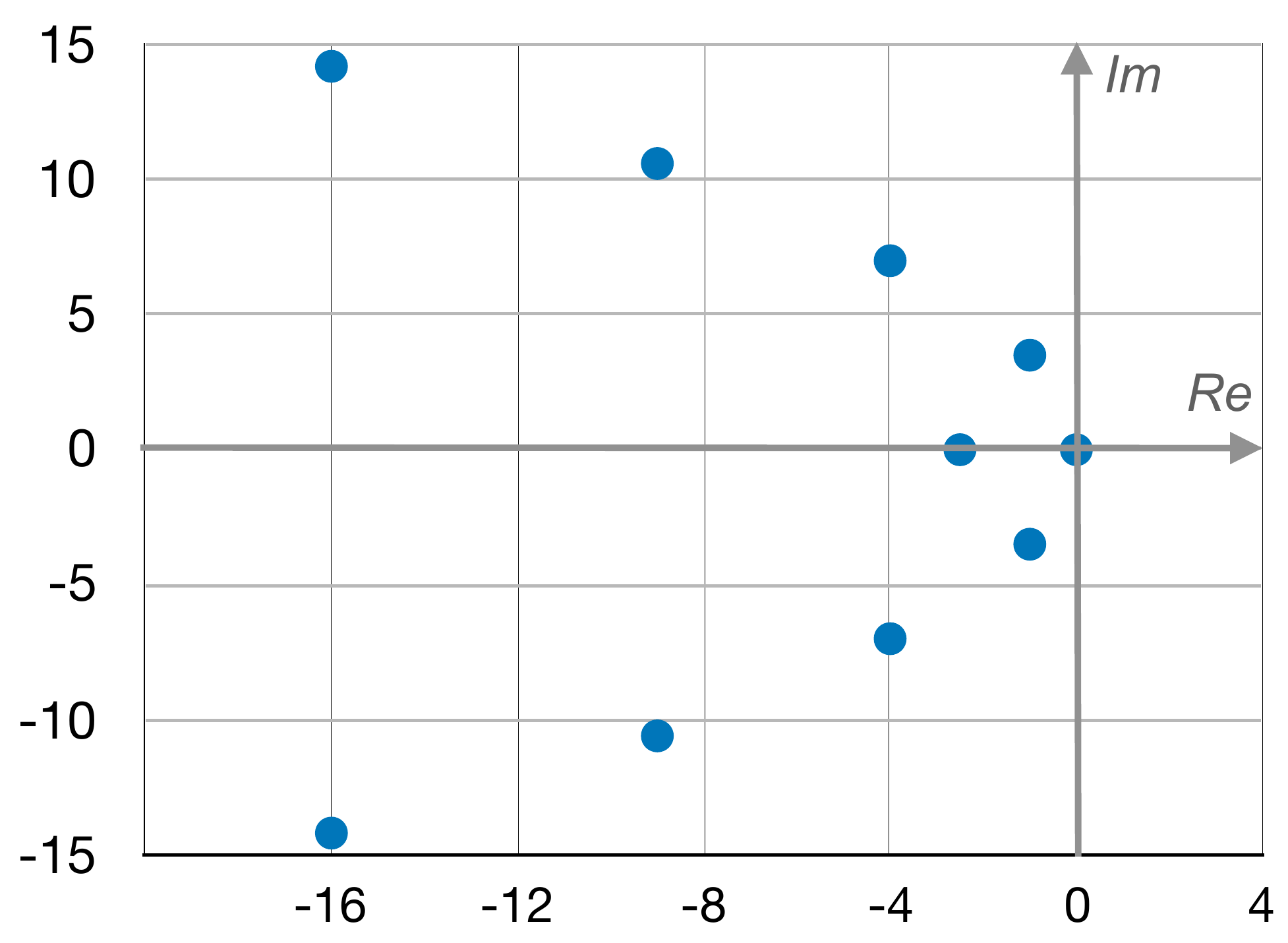}
  \centering
  \caption{The spectrum of the open loop system with viscosity $\eta = 0.1$.}
  \label{spectre-open1}
\end{figure}

In the next section, we shall show that the system can be stabilized by output feedback when the (unknown) viscosity is small and even almost negligible.

\section{Output feedback stabilization of the viscous system} \label{sectionclosedloop}

We now assume that the open loop control system \eqref{unsys} is closed with a dead beat output feedback controller
\begin{equation} \label{dbcontrol}
U(t) = - Y(t- 1).	
\end{equation}
Note that this corresponds to the special case of the controller \eqref{dyncontrol} where $k_1=0$, $k_2=1$ and $\tau = 1/\upsilon = 1$. We know from Section \ref{sectioncontrolproblem} that, in this case, the inviscid system is exponentially stable but the stability is not robust to small perturbations in the propagation speeds. We shall show here that the exponential stability remains in the viscous system and we shall check in Section \ref{sectionrobustness} that, in addition, the exponential stability is robust with small variations in the propagation speeds.
In the frequency domain, the closed loop system \eqref{unsys}, \eqref{dbcontrol} is then:
\begin{subequations} \label{funsys}
\begin{gather}
\tmy_{\!1}(s,1) = f_\eta(s) \tmy_{\!1}(s,0), \\[0.5em]
\tmy_2(s,1) = f_\eta(s) \tmy_2(s,0), \\[0.5em]
\hat \tmy_2(s,1) = e^{-s} \hat \tmy_2(s,0) \label{bcfunsys0}\\[0.5em]
\tmy_{\!1}(s,0) = \tmy_2(s,1) - \hat \tmy_2(s,1), \label{bcfunsys1}\\[0.5em]
\tmy_2(s,0) = \tmy_{\!1}(s,1), \label{bcfunsys2}\\[0.5em]
\hat \tmy_2(s,0) = \tmy_{\!1}(s,1), \label{bcfunsys3}
\end{gather}
\end{subequations}
From these equations, it follows that the characteristic equation of the closed loop system is:
\begin{equation} \label{charac}
f_\eta^2(s) - f_\eta(s)e^{-s} - 1 = 0.	
\end{equation}

Our purpose is now to address the stability of this closed loop system. For a given value of the viscosity $\eta$, the spectrum $\mc{S}_\eta$ of the closed loop system is the set of the poles which are the roots of the characteristic equation \eqref{charac}:
\begin{equation}
	\mc{S}_\eta = \{ s : f_\eta^2(s) - f_\eta(s)e^{-s} - 1 = 0\}.
\end{equation}
Moreover, the maximal spectral abscissa is defined as the supremum of the real parts of the spectrum and denoted as follows:
\begin{equation}
	\sigma_\eta = \sup\{\rp(s) : s \in \mc{S}_\eta \}.
\end{equation}

As a matter of illustration, we present in Figure \ref{spectre-closed} the spectrum of the closed loop system for $\eta = 0.1$.
\begin{figure}[t]
  \includegraphics[width=0.60\textwidth]{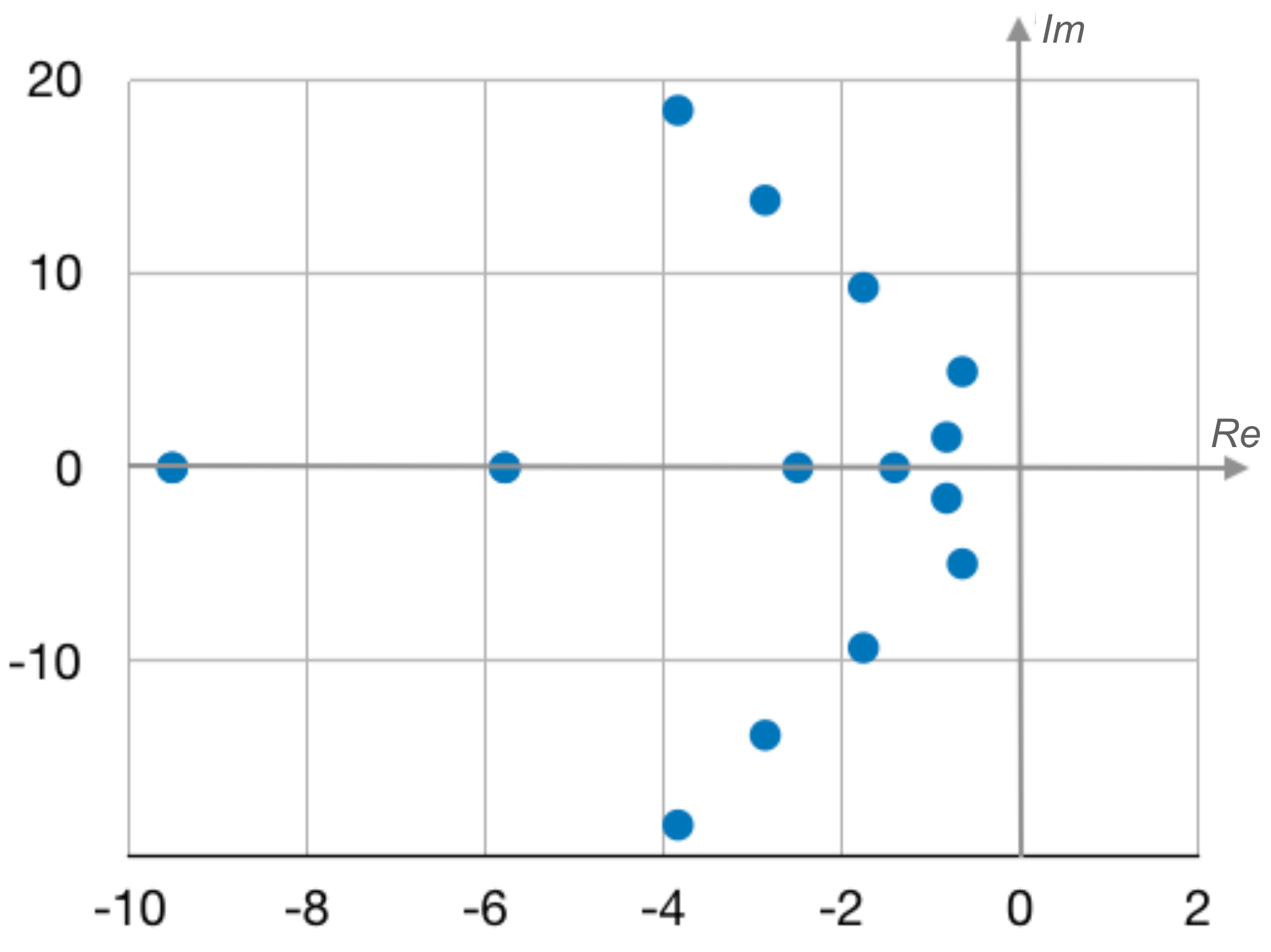}
  \centering
  \caption{The spectrum $\mc{S}_\eta$ of the closed loop system for $\eta = 0.1$.}
  \label{spectre-closed}
\end{figure}

The stability of the closed loop system \eqref{funsys} can be deduced using the spectral mapping theorem. See \cite[Chapter 9, Theorem 3.5]{HalVer93} and \cite{Lic08} in the case $\eta=0$, and reference \cite{BasCorHay22a} for the case $\eta \neq 0$. In particular the system is exponentially stable if (and only if) $\sigma_\eta < 0$.

Then, one of the main results of this paper is given in the following stability theorem.

\begin{theorem}
\label{thmln2}
For any $\delta > 0$, there exists $\eta_{1}>0$ such that, for all $\eta\in(0,\eta_{1})$, the maximal spectral abscissa satisfies
\begin{equation}
\label{sigmaetapetit}
\sigma_\eta \ls -\ln(2) + \delta.
\end{equation}
\qed
\end{theorem}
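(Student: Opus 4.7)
My plan is to argue by contradiction, combining a refined asymptotic expansion of $f_{\eta}$ as $\eta\to 0^{+}$ with a case analysis on where a hypothetical bad pole could lie. Assume the theorem fails: there exist $\eta_{n}\to 0^{+}$ and $s_{n}\in\mc{S}_{\eta_{n}}$ with $\sigma_{n}:=\rp(s_{n})\gs -\ln 2+\delta$. Write $s_{n}=\sigma_{n}+i\omega_{n}$ and $F_{\eta}(s):=f_{\eta}^{2}(s)-f_{\eta}(s)e^{-s}-1$, so that $F_{\eta_{n}}(s_{n})=0$; I must derive a contradiction.

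The main technical ingredient is a sharp expansion of $f_{\eta}$. Using $\sqrt{1+4\eta s}=1+2\eta s-2\eta^{2}s^{2}+O((\eta s)^{3})$ in the definition of $\lambda_{1},\lambda_{2}$, one obtains
\begin{equation*}
\lambda_{2}(s)=-s+\eta s^{2}+O(\eta^{2}|s|^{3}),\qquad \frac{\lambda_{1}-\lambda_{2}}{\lambda_{1}}=1+\eta s+O((\eta s)^{2}),
\end{equation*}
valid whenever $\eta|s|$ is small. Since $\rp(\lambda_{1})\gtrsim 1/\eta$, the term $\lambda_{2}e^{-\lambda_{1}}$ in the denominator of \eqref{transfer} is exponentially smaller than $\lambda_{1}e^{-\lambda_{2}}$. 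Hence, for $\eta|s|\to 0$,
\begin{equation*}
f_{\eta}(s)=e^{-s+\eta s^{2}}\bigl(1+o(1)\bigr),
\end{equation*}
and substituting this into $F_{\eta}=0$ gives the leading-order identity
\begin{equation*}
e^{-2s}\bigl(e^{2\eta s^{2}}-e^{\eta s^{2}}\bigr)=1+o(1).
\end{equation*}

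I would then carry out a case analysis on the asymptotics of $(\sigma_{n},\omega_{n},a_{n}:=\eta_{n}\omega_{n}^{2})$. Uniform convergence of $f_{\eta}$ to $e^{-s}$ on compact subsets of $\mathbb{C}$ excludes $|s_{n}|$ bounded, since then $F_{\eta_{n}}(s_{n})\to -1$. A direct estimate on $\rp(\lambda_{2}(s_{n}))$ gives $|f_{\eta_{n}}(s_{n})|\to 0$ whenever $\sigma_{n}\to +\infty$ or $\eta_{n}\omega_{n}\not\to 0$, forcing $|F_{\eta_{n}}(s_{n})|\to 1$ in both cases; hence, after extracting subsequences, I may assume $\sigma_{n}\to\sigma^{\star}\in[-\ln 2+\delta,+\infty)$ finite, $|\omega_{n}|\to+\infty$, $\eta_{n}\omega_{n}\to 0$, and $a_{n}\to a^{\star}\in[0,+\infty]$. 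The degenerate sub-cases $a^{\star}=0$ and $a^{\star}=+\infty$ respectively give $|f_{\eta_{n}}(s_{n})-e^{-s_{n}}|\to 0$ or $|f_{\eta_{n}}(s_{n})|\to 0$, and again $|F_{\eta_{n}}(s_{n})|\to 1$: contradiction.

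The heart of the argument is the remaining case $a^{\star}\in(0,+\infty)$. Since $\eta_{n}\omega_{n}\to 0$, one has $\ip(\eta_{n}s_{n}^{2})=2\eta_{n}\sigma_{n}\omega_{n}\to 0$ and $\rp(\eta_{n}s_{n}^{2})\to -a^{\star}$, so $w_{n}:=\eta_{n}s_{n}^{2}\to -a^{\star}$ in $\mathbb{C}$. Taking moduli in the leading-order identity and using $|e^{w_{n}}(e^{w_{n}}-1)|\to e^{-a^{\star}}(1-e^{-a^{\star}})$ for $a^{\star}>0$ yields in the limit
\begin{equation*}
e^{-2\sigma^{\star}}\bigl(e^{-a^{\star}}-e^{-2a^{\star}}\bigr)=1.
\end{equation*}
Since $a\mapsto e^{-a}-e^{-2a}$ attains its maximum $\tfrac{1}{4}$ at $a=\ln 2$, this forces $e^{-2\sigma^{\star}}\gs 4$, i.e.\ $\sigma^{\star}\ls -\ln 2$, contradicting $\sigma^{\star}\gs -\ln 2+\delta$. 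The main obstacle will be to make the expansion $f_{\eta}(s)=e^{-s+\eta s^{2}}(1+o(1))$ uniform along a sequence $s_{n}$ with $\eta_{n}|s_{n}|^{2}=O(1)$: one must control the $O(\eta^{2}|s|^{3})$ remainder in $\lambda_{2}$ (which is $o(1)$ precisely because $\eta|s|=o(1)$) and verify that $\lambda_{2}e^{-\lambda_{1}}$ remains negligible in this transition regime where $|s_{n}|\sim 1/\sqrt{\eta_{n}}$.
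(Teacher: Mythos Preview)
Your proposal is correct and reaches the same endpoint as the paper, but by a genuinely different route. The paper introduces the auxiliary variable $z=\sqrt{1+4\eta s}$, rewrites the characteristic equation \eqref{charac} as the exact identity \eqref{charac2} in $z$, and proceeds via two lemmas: Lemma~\ref{lemma1} bounds adherent points of $(z_n)$ (forcing $\rp(\bar z^2)\ls 1$, with equality only if $\bar z=\pm 1$), and Lemma~\ref{lemma2} then analyzes the delicate case $\bar z=1$ by setting $y_n=z_n-1=a_n+ib_n$ and splitting on the limit $\bar a$ of $a_n/(2\eta_n)$. In the finite sub-case it obtains $e^{\bar c}=1+e^{2\bar a}$ (with $\bar c=\lim b_n^2/4\eta_n$) and concludes $e^{-\rp(\bar s)}=2\cosh(\bar a)\gs 2$. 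Your approach instead stays in the $s$-plane, derives the two-term expansion $f_\eta(s)=e^{-s+\eta s^2}(1+o(1))$ in the regime $\eta|s|\to 0$, and does a case analysis directly on $(\sigma_n,\omega_n,\eta_n\omega_n^2)$. Your critical identity $e^{-2\sigma^\star}(e^{-a^\star}-e^{-2a^\star})=1$ is exactly the paper's relation after the translation $\bar a\leftrightarrow \sigma^\star+a^\star$, $\bar c\leftrightarrow a^\star$, and your maximization $\max_{a>0}\bigl(e^{-a}-e^{-2a}\bigr)=\tfrac14$ is the same inequality as the paper's $2\cosh(\bar a)\gs 2$.

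What each approach buys: your route makes the mechanism behind $-\ln 2$ immediately visible (it is the maximum of an explicit scalar function) and avoids the change of variable, at the price of having to control the Taylor remainder $O(\eta^2|s|^3)$ in $\lambda_2$ uniformly along the sequence; you correctly flag this as the main technical point, and it does go through since in the critical regime $\eta_n|s_n|\to 0$ with $\eta_n|s_n|^2=O(1)$ one has $\eta_n^2|s_n|^3=(\eta_n|s_n|)(\eta_n|s_n|^2)\to 0$, while in the sub-case $a^\star=+\infty$ the cruder bound $\rp(\lambda_2)\to-\infty$ suffices. The paper's $z$-variable approach trades this asymptotic bookkeeping for exact algebraic manipulations of \eqref{charac2}, which makes the estimates cleaner but hides the scaling picture that your expansion exposes.
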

\begin{conjecture}
\label{conj1}
The bound $\ln(2)$ on the decay rate is optimal. More precisely, for any $\delta > 0$ and $\eta_{1}>0$ there exists $\eta\in(0,\eta_{1})$ such that the maximal abscissa satisfies
\begin{equation}
\label{sigmaetapetit2}
-\ln(2) - \delta \ls \sigma_\eta \ls -\ln(2) + \delta.
\end{equation}
\qed
\end{conjecture}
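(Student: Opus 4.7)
The statement combines Theorem \ref{thmln2}'s upper bound $\sigma_\eta \ls -\ln 2 + \delta$ with a matching lower bound $\sigma_\eta \gs -\ln 2 - \delta$ which must hold for arbitrarily small $\eta$. My plan is to establish the latter by exhibiting, for each small $\eta$, an explicit pole of the closed-loop system whose real part is close to $-\ln 2$. The dead-beat limit $\eta = 0$ has no finite poles, so these poles must escape to infinity along a specific direction; the natural scaling turns out to be $|\ip s| \sim 1/\sqrt{\eta}$.

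First I would derive a uniform asymptotic expansion of $f_\eta$ in that regime. Writing $\mu := \sqrt{1+4\eta s}$, for $|\eta s| = O(\sqrt{\eta})$ one has $\mu = 1 + 2\eta s - 2(\eta s)^2 + O((\eta s)^3)$, hence $r := (1-\mu)/(1+\mu) = -\eta s + O((\eta s)^2)$, $\lambda_2 = -s + \eta s^2 + O(\sqrt{\eta})$, and $\rp(\mu/\eta) \gs c/\eta$ so that $e^{-(\lambda_1-\lambda_2)}$ is exponentially small. Substituting into \eqref{transfer} yields the uniform estimate
\begin{equation*}
f_\eta(s) = \bigl(1 + O(\sqrt{\eta})\bigr)\, e^{-s + \eta s^2}.
\end{equation*}
Define the approximating characteristic function $G_\eta(s) := e^{-2s + 2\eta s^2} - e^{-2s + \eta s^2} - 1$, so that $F_\eta(s) - G_\eta(s) = O(\sqrt{\eta})$ uniformly in the relevant region. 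Writing $s = \sigma + i\beta$ and introducing $\omega := \sqrt{\eta}\,\beta$ (so that $\eta s^2 = -\omega^2 + O(\sqrt{\eta})$ when $\sigma$ is bounded), the equation $G_\eta(s) = 0$ reduces at leading order to
\begin{equation*}
-e^{-2\sigma - 2i\beta}\, e^{-\omega^2}(1 - e^{-\omega^2}) = 1,
\end{equation*}
which splits into a phase condition $\beta = (2k+1)\pi/2$ for some $k \in \mathbb{Z}$ and a modulus condition $\sigma = \tfrac12 \ln[e^{-\omega^2}(1 - e^{-\omega^2})]$. Viewed as a function of $\omega$, the latter is maximized at $\omega^2 = \ln 2$ (so $e^{-\omega^2} = 1/2$), with maximum value $\tfrac12 \ln(1/4) = -\ln 2$.

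For each small $\eta$ I would select $k(\eta) \in \mathbb{Z}$ such that $\omega_{k(\eta)} := \sqrt{\eta}(2k(\eta)+1)\pi/2$ is within $\pi\sqrt{\eta}/2$ of $\sqrt{\ln 2}$, set $\sigma_{k(\eta)} := \tfrac12 \ln[e^{-\omega_{k(\eta)}^2}(1-e^{-\omega_{k(\eta)}^2})]$, and take as candidate pole $s_\eta^{(0)} := \sigma_{k(\eta)} + i(2k(\eta)+1)\pi/2$; then $\rp(s_\eta^{(0)}) \to -\ln 2$ as $\eta \to 0^+$. A direct computation using $e^{-2is_\eta^{(0)}} = -e^{-2\sigma_{k(\eta)}}$ shows $G_\eta(s_\eta^{(0)}) = O(\sqrt{\eta})$ and $G_\eta'(s_\eta^{(0)}) = -2 + O(\sqrt{\eta})$, so by Newton's method $G_\eta$ has a zero $s_\eta^*$ within $O(\sqrt{\eta})$ of $s_\eta^{(0)}$. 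On the circle of radius $r_\eta := \eta^{1/3}$ around $s_\eta^*$, the derivative lower bound yields $|G_\eta(s)| \geq c\, r_\eta$, whereas the uniform expansion gives $|F_\eta(s) - G_\eta(s)| = O(\sqrt{\eta}) = o(r_\eta)$. Rouché's theorem then produces a genuine root $s_\eta^{**}$ of $F_\eta$ inside this disk, satisfying $\rp(s_\eta^{**}) \to -\ln 2$, and since $\sigma_\eta \gs \rp(s_\eta^{**})$ this proves $\sigma_\eta \gs -\ln 2 - \delta$ for all sufficiently small $\eta$.

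The main technical obstacle is establishing the expansion $f_\eta(s) = (1 + O(\sqrt{\eta}))\,e^{-s + \eta s^2}$ with a remainder that is genuinely uniform (and holomorphic) on the disk of radius $\eta^{1/3}$ around $s_\eta^{(0)}$, rather than merely pointwise at $s_\eta^{(0)}$. The delicate point is that the neglected cubic term $2\eta^2 s^3$ in $\lambda_2$ is $O(\sqrt{\eta})$ only because $|s| = O(\eta^{-1/2})$, and its cubic dependence on $s$ must be controlled carefully as $s$ moves across the disk; higher-order terms in the Taylor expansion of $\mu$ also have to be estimated in the same uniform sense. Once this uniform asymptotic is in place, the Rouché/implicit-function step is standard.
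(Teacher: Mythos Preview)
The paper does not prove this statement: it is explicitly labelled a conjecture, and the only evidence offered is numerical (Figure~\ref{spectre-closed2}). Your proposal therefore goes well beyond what the paper does.

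Your strategy is sound and, once completed, would actually yield more than the conjecture asks: you would obtain the two-sided estimate for \emph{all} sufficiently small $\eta$, not merely for some $\eta\in(0,\eta_1)$. The key computations are correct. In the regime $|s|=O(\eta^{-1/2})$ one has $\lambda_2=-s+\eta s^2+O(\sqrt{\eta})$ and $e^{-\lambda_1}=O(e^{-c/\eta})$, so $f_\eta(s)=(1+O(\sqrt{\eta}))\,e^{-s+\eta s^2}$; the simplified characteristic function $G_\eta(s)=e^{-2s+2\eta s^2}-e^{-2s+\eta s^2}-1$ then approximates $F_\eta$ to $O(\sqrt{\eta})$. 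Writing $\eta s^2\approx-\omega^2$ with $\omega=\sqrt{\eta}\,\beta$, the zero-set condition $e^{-2s}e^{-\omega^2}(e^{-\omega^2}-1)=1$ gives the modulus relation $e^{2\sigma}=e^{-\omega^2}(1-e^{-\omega^2})$, whose maximum $1/4$ at $e^{-\omega^2}=1/2$ yields $\sigma=-\ln 2$, and the phase relation $\beta\in(2\mathbb{Z}+1)\pi/2$. Choosing $k(\eta)$ so that $\omega_{k(\eta)}$ is within $O(\sqrt{\eta})$ of $\sqrt{\ln 2}$ is straightforward; $G_\eta(s_\eta^{(0)})=O(\sqrt{\eta})$ and $G_\eta'(s_\eta^{(0)})=-2+O(\sqrt{\eta})$ follow by direct substitution, and the Rouch\'e comparison on a disk of radius $\eta^{1/3}$ works because $\sqrt{\eta}=o(\eta^{1/3})$.

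The ``main technical obstacle'' you identify---uniformity and holomorphy of the remainder in $f_\eta(s)=(1+O(\sqrt{\eta}))\,e^{-s+\eta s^2}$ on the Rouch\'e disk---is not a genuine obstruction. Since $|4\eta s|\ls C\sqrt{\eta}<1$ there, the binomial series for $\sqrt{1+4\eta s}$ converges absolutely and uniformly with holomorphic tail of order $(\eta s)^2=O(\eta)$; the cubic correction $\eta^2 s^3$ in $\lambda_2$ is uniformly $O(\sqrt{\eta})$; and the neglected term $\lambda_2 e^{-\lambda_1}$ is uniformly exponentially small. So all the ingredients are in place; what remains is bookkeeping rather than a missing idea.
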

\begin{remark}[Loss of continuity of the spectral abscissa]
Conjecture \ref{conj1} seems to be verified when looking at the spectrum numerically (see Figure \ref{spectre-closed2}). This would imply a loss of continuity in the sense that any decay rate is achievable when $\eta=0$. However there is a bound $\ln(2)$ as soon as $\eta>0$, even if $\eta$ is arbitrarily small.
\end{remark}

\noi In order to prove Theorem \ref{thmln2}, it is useful to define the complex variable
\begin{equation} \label{defz}
 z = \sqrt{1 + 4\eta s},	
\end{equation}
where we recall that $\sqrt{\cdot}$ is the principal value of the square root. The functions $\lambda_1$ and $\lambda_2$ become
\begin{equation}
\lambda_1 = \dfrac{(1 + z)}{2\eta} \hd \text{and} \hd \lambda_2 = \dfrac{(1 - z)}{2\eta}.
\end{equation}
Then, defining the function
\begin{equation} \label{defX}
X_\eta(z) =  \dfrac{1+z}{2} e^{-(1-z)/2\eta} - \dfrac{1-z}{2} e^{-(1+z)/2\eta},	
\end{equation}
we get, using \eqref{transfer}, that the characteristic equation \eqref{charac} is equivalent to
\begin{equation} \label{charac2}
X^2_\eta(z) + ze^{-(z^2 - 1)/4\eta}X_\eta(z) - z^2 = 0.	
\end{equation}
Let us now observe that $X_\eta(0) = 0$. Consequently $z = 0$ is a root of equation \eqref{charac2} for all $\eta$. This implies obviously that $s = -1/4\eta \in \mc{S}_\eta$  is a pole of the system (i.e. a root of the characteristic equation \eqref{charac}). Note that this pole tends to $-\infty$ as $\eta\rightarrow 0^{+}$ so this is a very stable pole for small (positive) $\eta$.

Let $\mc{Z}_\eta$ denote the set of \textit{non-zero} roots of equation \eqref{charac2}.
Then, using definition \eqref{defX} and solving equation \eqref{charac2} with respect to $X_\eta$, we have that every $z \in \mc{Z}_\eta$ satisfies the equation
\begin{equation} \label{eq22}
\dfrac{(1+z)}{z} e^{-(1-z)/2\eta} - \dfrac{(1-z)}{z} e^{-(1+z)/2\eta}  = -e^{-(z^2-1)/4\eta} \pm \sqrt{e^{-(z^2-1)/2\eta} + 4}.
\end{equation}
We now consider a sequence
\begin{equation} \label{sequence-eta}
	(\eta_n)_{n \in \N} \hd \text{with} \hd 0 < \eta_n \in \real, \; \forall n \in \N \hd \text{and} \hd \lim_{n \rightarrow +\infty} \eta_n = 0^+,
\end{equation}
and an associated sequence
\begin{equation} \label{sequence-s}
	(s_n)_{n \in \N} \hd \text{such that} \hd s_n \in \mc{S}_{\eta_n}, \forall n \in \N.
\end{equation}

In order to prove Theorem \ref{thmln2}, we will look to the adherent points of the sequences $(s_n)_{n \in \N}$ when $n \rightarrow +\infty$ (i.e. when $\eta_n \rightarrow 0^+$). By definition, we know that
$\bar{s}$ is an adherent point of a sequence $(s_n)_{n \in \N}$ if and only if there exists a subsequence which converges to $ \bar s$. With a slight abuse of notation, we will write
\begin{equation}
	s_n \longrightarrow \bar s \hd \text{or} \hd \lim_{n \rightarrow +\infty} s_n = \bar s
\end{equation}
to signify that $\bar s$ is an adherent point of a sequence $(s_n)_{n \in \N}$ but it is implied that the convergence in fact only relies on the adequate subsequence. This holds also for all other sequences that are introduced later in this article.

The proof of Theorem \ref{thmln2} is built from the two following lemmas.

\begin{lemma} \label{lemma1} Let $(\eta_n)_{n \in \N}$ be a sequence of the form \eqref{sequence-eta} and $(z_n)_{n \in \N}$ be any associated sequence of elements of $\mathcal{Z}_{\eta}$.
Let $\bar z$ be an adherent point of the sequence $(z_n)_{n \in \N}$. Then
	\begin{align}
	&\text{a)} \;\; \rp (\bar z^2) \ls 1 \hd \big(\text{precisely: } \rp (\bar z^2) \in [-\infty, 1]\; \big);\\[0.5em]
	&\text{b)} \;\; \text{if } \rp (\bar z^2) = 1 \text{ then } \left(\rp (\bar z) \right)^{\! 2} = 1 \text{ and } \ip (\bar z) = 0.
\end{align}
\end{lemma}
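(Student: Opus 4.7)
The plan is to work exclusively with equation \eqref{eq22}, which every $z_n\in\mathcal{Z}_{\eta_n}$ satisfies for one of the two choices of sign; by extracting a subsequence we may assume this sign is fixed. Because $z_n$ is a principal square root, $\rp(z_n)\geq 0$ throughout, and this inequality passes to any finite adherent point. The strategy is a modulus comparison on both sides of \eqref{eq22} as $\eta_n\to 0^+$: the LHS is dominated by its first term $\frac{1+z_n}{z_n}e^{-(1-z_n)/(2\eta_n)}$, whose modulus scales like $e^{(\rp(z_n)-1)/(2\eta_n)}$ up to a bounded factor (the second term decays like $e^{-(1+\rp(z_n))/(2\eta_n)}$ since $\rp(1+z_n)\geq 1$); and, setting $w_n:=e^{-(z_n^2-1)/(4\eta_n)}$, one checks that $|\text{RHS}|\lesssim \max(|w_n|,1)$.

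For part (a), assume $\rp(\bar z^2)>1$. If $\bar z=a+ib$ is finite, then $a^2-b^2>1$ with $a\geq 0$ forces $a>1$; if instead $|z_n|\to\infty$ along the subsequence with $\rp(z_n^2)\to+\infty$, then $\rp(z_n)^2>\ip(z_n)^2+1$ and again $\rp(z_n)\to+\infty$. Either way $\rp(z_n)\geq c>1$ for $n$ large, so $|\text{LHS}|\gtrsim e^{(c-1)/(2\eta_n)}\to+\infty$. On the other hand $|w_n|=e^{-(\rp(z_n^2)-1)/(4\eta_n)}\to 0$, so the RHS tends to $\pm 2$ and is bounded. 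This is the desired contradiction.

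For part (b), assume $\rp(\bar z^2)=1$. If $\bar z$ is finite and $\ip\bar z=0$, then $\bar z=a$ with $a\geq 0$ and $a^2=1$, so $a=1$ and both claims hold. It remains to exclude the two sub-cases (b.1) $\bar z=a+ib$ finite with $b\neq 0$ (whence $a>1$) and (b.2) $|z_n|\to\infty$ with $\rp(z_n^2)\to 1$ (which forces $\rp(z_n)\to+\infty$). In both sub-cases $\rp(z_n)$ is eventually above some $c>1$, so $|\text{LHS}|\to\infty$ as before; however the ``$-$'' branch of the RHS may also blow up when $\rp(z_n^2)<1$, with $|\text{RHS}|\sim 2|w_n|$. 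The decisive step is the rate comparison
\[
\frac{|\text{LHS}|}{|\text{RHS}|}\ \gtrsim\ \exp\!\left(\frac{2(\rp(z_n)-1)+(\rp(z_n^2)-1)}{4\eta_n}\right).
\]
In case (b.1), the limits $\rp(z_n)\to a$ and $\rp(z_n^2)\to a^2-b^2=1$ give the numerator $\to 2(a-1)+0=2(a-1)>0$; in case (b.2) the numerator tends to $+\infty$ even faster. Dividing by $4\eta_n\to 0^+$, the exponent tends to $+\infty$, so the ratio diverges and equation \eqref{eq22} is violated. The main obstacle is precisely this exponential rate comparison in case (b.1), where both sides of \eqref{eq22} blow up in modulus, and the contradiction hinges on the identity $a^2-b^2=1$ exactly cancelling the otherwise dangerous contribution to the exponent and leaving the clean positive residue $2(a-1)$.
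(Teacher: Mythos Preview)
Your argument is essentially the same as the paper's: both prove the lemma by a modulus comparison on \eqref{eq22}, showing that when $\rp(\bar z)>1$ the left-hand side blows up like $e^{(\rp(z_n)-1)/2\eta_n}$ faster than the right-hand side can. The paper implements this in part (b) by multiplying both sides of \eqref{eq22} by $e^{(1-a_n)/4\eta_n}$ and showing the new right side tends to $0$ while the new left side tends to $\infty$; your direct ratio bound with the exponent $[2(\rp(z_n)-1)+(\rp(z_n^2)-1)]/4\eta_n$ is the same computation reorganised, and your explicit treatment of the unbounded sub-case (b.2) is a welcome addition that the paper glosses over.

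One small gap: you assert $\rp(z_n)\ge 0$ ``because $z_n$ is a principal square root'', but $\mathcal{Z}_\eta$ is defined in the paper as the set of \emph{all} non-zero roots of \eqref{charac2}, not as the image of $\mathcal{S}_\eta$ under $s\mapsto\sqrt{1+4\eta s}$. Since \eqref{charac2} is invariant under $z\mapsto -z$ (check that $X_\eta(-z)=-X_\eta(z)$), you may indeed reduce to $\rp(z_n)\ge 0$ without loss of generality, but you should say so; the paper instead treats the cases $\rp(\bar z)>1$ and $\rp(\bar z)<-1$ separately, invoking the symmetry only informally.
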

\begin{proof}
	The proof of this lemma is given in Appendix~\ref{app_proof-lemma-1}.
\end{proof}

\begin{lemma} \label{lemma2} Let $(\eta_n)_{n \in \N}$ be a sequence of the form
\eqref{sequence-eta} and $(s_n)_{n \in \N}$ be any associated sequence of the form \eqref{sequence-s}.
Let $\bar s$ be an adherent point of $(s_n)_{n \in \N}$. Then
\begin{equation}
\rp (\bar s) \ls - \ln (2) \hd \big(\text{precisely: } \rp (\bar s) \in [-\infty, -\ln(2)]\; \big).	
\end{equation}
\end{lemma}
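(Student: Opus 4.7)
The plan is to combine Lemma~\ref{lemma1}, applied to $z_n:=\sqrt{1+4\eta_n s_n}$, with a careful asymptotic analysis of the characteristic equation. If the adherent point $\bar z$ of $(z_n)$ satisfies $\rp(\bar z^{2})<1$ (including $-\infty$), then the identity $\rp(s_n)=(\rp(z_n^{2})-1)/(4\eta_n)$ together with $\eta_n\to 0^{+}$ gives directly $\rp(s_n)\to-\infty$, so $\rp(\bar s)=-\infty\ls -\ln(2)$. Thus the substantive case is $\rp(\bar z^{2})=1$, in which Lemma~\ref{lemma1}(b) forces $\bar z=\pm 1$; the two choices are symmetric under $z\mapsto -z$ in \eqref{charac2}, so I focus on $z_n\to 1$.

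In that regime I introduce $\beta_n:=(z_n-1)/(2\eta_n)$, so that $z_n=1+2\eta_n\beta_n$, $s_n=\beta_n(1+\eta_n\beta_n)$, and $\sigma_n:=s_n-\beta_n=\eta_n\beta_n^{2}$. Because $\rp\bigl((1+z_n)/(2\eta_n)\bigr)\to +\infty$ at rate $1/\eta_n$, the second term on the left-hand side of \eqref{eq22} is super-exponentially small in $1/\eta_n$ and is absorbed into the error. The remaining left-hand side of \eqref{eq22} equals $2X_{\eta_n}(z_n)/z_n=2/f_{\eta_n}(s_n)$, and a direct manipulation of \eqref{transfer} gives $f_{\eta_n}(s_n)=e^{-s_n}\,B_n$ with $B_n:=e^{\sigma_n}(1+\sigma_n/s_n)$. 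Combining this with the Vieta reformulation $f_\eta(s)\,\bigl(f_\eta(s)-e^{-s}\bigr)=1$ of the characteristic equation \eqref{charac}, one arrives at the reduced identity
\[
B_n(B_n-1)=e^{2 s_n}+o(1).
\]

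To finish, first observe that a finite adherent point $\bar s$ in the usual sense would force $\sigma_n\to 0$, $B_n\to 1$, and hence $e^{2\bar s}=0$, which is absurd; so necessarily $|\ip(s_n)|\to\infty$. Writing $s_n=r_n+it_n$ with $r_n$ bounded (the case $r_n\to -\infty$ being already settled) and $|t_n|\to\infty$, the compatibility $z_n\to 1$ forces $\eta_n t_n\to 0$, and the only relevant scale is $u_n:=\eta_n t_n^{2}\gs 0$. In this regime $\sigma_n\approx-u_n$ and $\sigma_n/s_n\approx i\eta_n t_n\to 0$, so $B_n\to e^{-u_n}\in(0,1)$, which is real. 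Matching the phase in $B_n(B_n-1)=e^{2s_n}+o(1)$ yields the quantization $t_n\equiv\pi/2\pmod{\pi}$, and matching moduli gives
\[
e^{-u_n}\bigl(1-e^{-u_n}\bigr)=e^{2 r_n}+o(1).
\]
The elementary inequality $e^{-u}(1-e^{-u})\ls 1/4$ for $u\gs 0$, with equality at $u=\ln 2$, then delivers $e^{2r_n}\ls 1/4+o(1)$, hence $r_n\ls -\ln(2)+o(1)$; passing to the limit gives $\rp(\bar s)\ls -\ln(2)$. The case $\bar z=-1$ is treated identically via the symmetric substitution.

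The main technical difficulty is the uniformity of the reduction $B_n(B_n-1)=e^{2s_n}+o(1)$ in the critical scale $|s_n|\sim 1/\sqrt{\eta_n}$: there $\sigma_n=O(1)$ rather than $o(1)$, so one cannot linearise $e^{\sigma_n}$ around $1$ and must keep it explicit, while the remainder terms from the Taylor expansion of $X_{\eta_n}(z_n)$ near $z=1$ and from the discarded super-exponentially small summand have to be controlled carefully. Once this is in hand, the conclusion follows from the inequality $e^{-u}(1-e^{-u})\ls 1/4$, which also pinpoints the critical value $u=\ln 2$ at which the bound $-\ln(2)$ is attained, in agreement with Conjecture~\ref{conj1}.
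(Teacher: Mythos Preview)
Your reformulation via $B'_n:=f_{\eta_n}(s_n)e^{s_n}$ is essentially a change of variables in the paper's argument. The paper works with $y_n=z_n-1$, derives the limit $e^{y_n/\eta_n}+(1+o(1))e^{-y_n^{2}/4\eta_n}\to 1$, and splits into cases according to the adherent value $\bar a$ of $\rp(y_n)/(2\eta_n)=\rp(\beta_n)$; in the finite-$\bar a$ case it obtains $e^{-\rp(\bar s)}=2\cosh(\bar a)\gs 2$. Since $y_n/\eta_n=2\beta_n$ and $y_n^{2}/(4\eta_n)=\sigma_n$, multiplying the paper's limit by $e^{2\sigma_n}$ gives exactly your $B_n(B_n-1)=e^{2s_n}+o(1)$, and your inequality $e^{-u}(1-e^{-u})\ls 1/4$ is the same bound as $2\cosh(\bar a)\gs 2$ under the substitution $e^{\bar c}=1+e^{2\bar a}$, $u=\bar c$. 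So the two routes coincide; yours has the virtue of packaging the characteristic equation as the exact quadratic $B'_n(B'_n-1)=e^{2s_n}$, which makes the final step very clean.

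There is, however, a genuine gap in your case split. After excluding a finite limit $\bar s\in\mathbb{C}$, you pass directly to ``$r_n$ bounded (the case $r_n\to-\infty$ being already settled)'', silently discarding the possibility $r_n\to+\infty$. Nothing proved so far rules this out: Lemma~\ref{lemma1} only gives $\rp(\bar z^{2})\ls 1$, which is compatible with $\rp(z_n^{2})\to 1$ from above and hence with $r_n=(\rp(z_n^{2})-1)/(4\eta_n)\to+\infty$. The paper handles this via its separate subcase $\bar a=+\infty$, where it shows $\rp(s_n)\to -\infty$. In your variables the fix is short: if $r_n\to+\infty$ then $|B'_n|\to\infty$, so $B'_n(B'_n-1)\sim (B'_n)^{2}$ and hence $e^{-2\beta_n}=e^{2\sigma_n-2s_n}\to 1$, forcing $\rp(\beta_n)\to 0$; but then $\rp(\sigma_n)=\eta_n\bigl((\rp\beta_n)^{2}-(\ip\beta_n)^{2}\bigr)\ls \eta_n(\rp\beta_n)^{2}\to 0$, contradicting $|B'_n|\sim e^{\rp(\sigma_n)}\to\infty$. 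You should insert this (or an equivalent) argument before restricting to bounded $r_n$.
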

\begin{proof}
Let $\bar s$ be an adherence point of the considered sequence $(s_n)_{n \in \N}$. We restrict to a subsequence (still denoted $(s_n)_{n \in \N}$) such that $(s_n)_{n \in \N}$ converges to $\bar s$.
Consider the sequence
\begin{equation} \label{sequence-z}
	(z_n)_{n \in \N} \hd \text{such that} \hd z_n = \sqrt{1+4\eta s_{n}}, \forall n \in \N.
\end{equation}
By definition $z_{n}\in\mathcal{Z}_{\eta}$ for any $n\in\N$. Let $\bar z$ be an adherence value of this sequence $(z_{n})_{n\in\mathbb{N}}$ and let us again restrict to a subsequence (still denoted $(z_{n})_{n\in\mathbb{N}}$) that converges to $\bar z$. Note that since it is a subsequence, we still have $s_{n}\rightarrow \bar s$. From Lemma \ref{lemma1}, we know that necessarily $\rp (\bar z^2) \in [-\infty, 1]$.

Let us first assume that $\rp (\bar z^2)$ is strictly smaller than 1, i.e. $\rp (\bar z^2) \in [-\infty, 1)$. In that case, from the definition of $s_n$ in \eqref{sequence-s}, we have
\begin{equation} \label{formula}
	\rp (\bar s) = \lim_{n \rightarrow +\infty} \rp (s_n) = \lim_{n \rightarrow +\infty} \dfrac{\rp (z_n^2) - 1}{4 \eta_n} = \dfrac{\lim_{n \rightarrow +\infty} \rp (z_n^2) - 1}{\lim_{n \rightarrow +\infty} 4 \eta_n} = \dfrac{\rp (\bar z^2) - 1}{\lim_{n \rightarrow +\infty} 4 \eta_n}.
\end{equation}
Since, by the definition \eqref{sequence-eta}, we know that $\lim_{n \rightarrow +\infty} 4 \eta_n = 0^+$, we can conclude
\begin{equation}
	\rp (\bar s) = - \infty
\end{equation}
and the lemma is proved.

Let us now assume that $\rp (\bar z^2) = 1$. From Lemma \ref{lemma1} we know that, if $\rp (\bar z^2) = 1$, then necessarily $\rp (\bar z) = \pm 1$ and $\ip (\bar z) = 0$; hence $\bar{z} = \pm1$.  We address the case where $\bar z = 1$ (the reader can easily handle the case $\bar z = -1$ by symmetry) and we introduce the notations
\begin{equation} \label{notation-yn}
	y_n = z_n -1, \hd a_n = \rp(y_n), \hd b_n = \ip (y_n).
\end{equation}
With these notations we have
\begin{equation} \label{defsn}
	\rp (s_{n}) = \dfrac{\rp (z_{n}^{2})-1}{4\eta_n} = \dfrac{\rp ((y_n+1)^{2})-1}{4\eta_n}
= \dfrac{a^2_{n}-b_n^{2}}{4\eta_n} + \dfrac{a_n}{2\eta_{n}}.
\end{equation}
Here the determination of adherent points of the sequence $(\rp (s_n))_{n \in \N}$, induced by the limit $\rp (z_n) \rightarrow 1$ is clearly more delicate because it cannot be directly derived from formula \eqref{defsn} and requires a development which is detailed hereafter.

With the notation \eqref{notation-yn}, the function $X_\eta$ defined in \eqref{defX} becomes
\begin{equation}
	X_{\eta} = (1+\frac{y_{n}}{2})e^{y_{n}/2\eta_n}+ \frac{y_{n}}{2}e^{-1/\eta_n-y_{n}/2\eta_{n}},
\end{equation}
such that the characteristic equation \eqref{charac2} is written
\begin{align}
&((1+\frac{y_{n}}{2})e^{y_{n}/2\eta_{n}}+ \frac{y_{n}}{2}e^{-1/\eta_{n}-y_{n}/2\eta_{n}})^{2} \nonumber\\
&\hh \hh +(1+y_{n})e^{-y_{n}/2\eta_{n}-y_{n}^{2}/4\eta_n}[(1+\frac{y_{n}}{2})e^{y_{n}/2\eta_{n}}+ \frac{y_{n}}{2}e^{-1/4\eta_{n}-y_{n}/2\eta_{n}}] = (1+y_{n})^{2}.	
\end{align}
Now, because $\eta_n \rightarrow 0^+$ and $y_n \rightarrow 0$ (since $z_{n}\rightarrow \bar{z}$), we have
\begin{align}
&(1+\frac{y_{n}}{2})^{2}e^{y_{n}/\eta_{n}}+ o(e^{-1/\eta_{n}})\nonumber \\
& \hh \hh +(1+y_{n})(1+\frac{y_{n}}{2})e^{-y_{n}^{2}/4\eta_{n}}+o(e^{-1/4\eta_{n}-y_{n}/\eta_{n}-y_{n}^{2}/4\eta_{n}}) = (1+y_{n})^{2}.	
\end{align}
This implies that, for $n \rightarrow +\infty$,
\begin{equation} \label{limit}
e^{y_{n}/\eta_{n}}+\frac{1+y_{n}}{1+\frac{y_{n}}{2}}e^{-y_{n}^{2}/4\eta_{n}} \longrightarrow 1.
\end{equation}
With \eqref{defsn}, \eqref{limit} becomes
\begin{equation} \label{eq:limyab}
e^{a_n/\eta_{n}}e^{ib_n/\eta_{n}}+\frac{(1+a_n+ib_n)(1+a_n/2-ib_n/2)}{|1+\frac{y_{n}}{2}|^{2}}e^{-a_n^{2}/4\eta_n}e^{b_n^{2}/4\eta_n}e^{-ia_nb_n/2\eta_{n}} \rightarrow 1,
\end{equation}
or, separating the real and imaginary parts,
\begin{gather}
e^{a_n/\eta_{n}}\cos({\textstyle \frac{b_n}{\eta_{n}}}) + e^{-a_n^{2}/4\eta_{n}}e^{b_n^{2}/4\eta_{n}}\left[\left(1+{\textstyle \frac{3a_n +a_n^2 + b_n^2}{2}}\right)\cos({\textstyle \frac{a_nb_n}{2\eta_{n}}})+{\textstyle \frac{b_n}{2}}\sin({\textstyle \frac{a_n b_n}{2\eta_{n}}})\right]\frac{1}{|1+\frac{y_{n}}{2}|^{2}}\rightarrow 1, \label{eq:limyab-real}\\[0.5em]
e^{a_n/\eta_{n}}\sin({\textstyle \frac{b_n}{\eta_{n}}}) + e^{-a_n^{2}/4\eta_{n}}e^{b_n^{2}/4\eta_{n}}\left[-\left(1+{\textstyle \frac{3a_n +a_n^2 + b_n^2}{2}}\right)\sin({\textstyle \frac{a_nb_n}{2\eta_{n}}})+{\textstyle \frac{b_n}{2}}\cos({\textstyle \frac{a_n b_n}{2\eta_{n}}})\right]\frac{1}{|1+\frac{y_{n}}{2}|^{2}}\rightarrow 0. \label{eq:limyab-imag}
\end{gather}
Let us now denote $\bar a$ an adherence point of the sequence $(a_n/2\eta_n)_{n \in \N}$.  We shall discuss successively the three cases : $\bar a = -\infty$, $\bar a \in (-\infty, +\infty)$ and $\bar a = +\infty$.\\

\noi \textbf{The case} $\mathbf{\bar a = - \infty.}$ \\
In this case the lemma is trivial because, since $a_n \rightarrow 0$ and $a_n/2\eta_n \rightarrow \bar a = -\infty$, then $a_n^2/4\eta_n + a_n/2\eta_n \rightarrow -\infty$ and therefore, from \eqref{defsn}, $\rp (\bar s) = - \infty$.\\

\noi \textbf{The case} $\mathbf{\bar a \in (- \infty, +\infty).}$

In this case $a_n^2/2\eta_n \rightarrow 0$ and $a_n b_n/2\eta_n \rightarrow  0$ because $a_n + i b_n = y_n \rightarrow 0$ while $a_n/2\eta_n \rightarrow \bar a\in\mathbb{R}$. Then from \eqref{eq:limyab-real}, we have
\begin{equation} \label{eq:limyab1}
e^{a_n/\eta_{n}}\cos({\textstyle \frac{b_n}{\eta_{n}}}) + e^{b_n^{2}/4\eta_{n}}(1 + o(1)) \longrightarrow 1.
\end{equation}
Let us now denote $\bar c \in [0,+\infty]$ an adherence point of the sequence $(b_n^{2}/4\eta_{n})_{n \in \N}$ and $\bar \kappa \in [-1,1]$ an adherent point of the sequence $(\cos({\textstyle \frac{b_n}{\eta_{n}}}))_{n \in \N}$. Then, taking the limit\footnote{This can be done by restricting to a subsequence where both $(b_n^{2}/4\eta_{n})_{n \in \N}$ and $(\cos({\textstyle \frac{b_n}{\eta_{n}}}))_{n \in \N}$ converge, using a diagonal argument.}, we have from \eqref{eq:limyab1}
\begin{equation} \label{e2akec}
e^{2 \bar a} \bar \kappa + e^{\bar c} = 1.	
\end{equation}
Because $e^{\bar a}$ is bounded and $e^{\bar a} > 0$, and $b_{n}^{2}/4\eta_{n}\geq 0$ for any $n\in\mathbb{N}$, this implies necessarily that
\begin{equation}
	\bar c \in [0, +\infty)\hd\text{and}\hd e^{\bar c} \gs 1 \hd \text{and} \hd \bar \kappa \in [-1, 0].
\end{equation}
Then it means in particular that $b_n^2/\eta_n$ is bounded when $\eta_n \rightarrow 0^+$, and from \eqref{eq:limyab-imag} we deduce that
\begin{equation}
	e^{a_n/\eta_{n}}\sin({\textstyle \frac{b_n}{\eta_{n}}}) + o(1) \longrightarrow 0
\end{equation}
which implies that $\sin({\textstyle \frac{b_n}{\eta_{n}}}) \rightarrow 0$ and therefore that the only possible adherence points for $\cos({\textstyle \frac{b_n}{\eta_{n}}})$ are $\bar \kappa = \pm 1$. Since $\bar \kappa\in [-1,0]$, we deduce that $\bar\kappa = -1$. Therefore, from \eqref{e2akec},
  \begin{equation} \label{e2aec}
 e^{\bar c} = 1 + e^{2 \bar a}.	
\end{equation}
Let us now consider the function $e^{-\rp (s_n)}$ with $\rp (s_n)$ given by \eqref{defsn}:
\begin{equation}
e^{-\rp (s_n)} = e^{(b_n^2 -a_n^2)/4\eta_n - a_n/2\eta_n} \longrightarrow  e^{\bar c}e^{- \bar a}	= e^{-\bar a} + e^{\bar a} = 2 \cosh(\bar a) \gs 2.
\end{equation}
It follows directly that
\begin{equation}
\rp (\bar s) \ls - \ln (2)	
\end{equation}
and the lemma is proved.\\

\noi \textbf{The case} $\mathbf{\bar a = + \infty.}$

In this case we first observe that $e^{(a_n^2 - b_n^2)/4\eta_n - a_n/2\eta_n} \rightarrow 0$. Then multiplying both sides of \eqref{eq:limyab-real} by this quantity, we get:
\begin{align}
&e^{(a_n^2 - b_n^2)/4\eta_n + a_n/2\eta_n}\cos({\textstyle \frac{b_n}{\eta_n}})+e^{-a_n/2\eta_n}\left[(1+{\textstyle \frac{3a_n + a_n^2 + b_n^2}{2}})\cos({\textstyle \frac{a_nb_n}{2\eta_{n}}})+{\textstyle \frac{b_{n}}{2}}\sin({\textstyle \frac{a_nb_n}{2\eta_{n}}})\right]\frac{1}{|1+\frac{y_{n}}{2}|^{2}} \nonumber\\
&\hspace{8cm}-e^{(a_n^2 - b_n^2)/4\eta_n - a_n/2\eta_n}\rightarrow 0, \label{eq:limyab2}
\end{align}
From this expression, we deduce that
\begin{equation} \label{eq:limyab21}
	e^{(a_n^2 - b_n^2)/4\eta_n + a_n/2\eta_n}\cos({\textstyle \frac{b_n}{\eta_n}}) \longrightarrow 0.
\end{equation}
A similar manipulation of \eqref{eq:limyab-imag} gives
\begin{equation} \label{eq:limyab22}
	e^{(a_n^2 - b_n^2)/4\eta_n + a_n/2\eta_n}\sin({\textstyle \frac{b_n}{\eta_n}}) \longrightarrow 0.
\end{equation}
Combining \eqref{eq:limyab21} and \eqref{eq:limyab22} we obtain
\begin{equation}
	e^{(a_n^2 - b_n^2)/2\eta_n + a_n/\eta_n} = e^{2\rp (s_n)} \longrightarrow 0
\end{equation}
which implies that $\rp (s_n) \rightarrow -\infty$ and the lemma is proved.
\end{proof}

Based on Lemma \ref{lemma2}, we can now give the following proof of Theorem \ref{thmln2}.\\

\noi \textbf{Proof of Theorem \ref{thmln2}.}\\
We assume by contradiction that the theorem does not hold:
\begin{equation}
	\text{For any }\delta > 0, \nexists \, \eta_1 > 0 \text{ such that } \sigma_\eta \ls -\ln (2) + \delta \;\; \forall \eta \in (0, \eta_1).
\end{equation}
This implies that
\begin{equation} \label{forall}
\text{For all } \eta > 0, \; \exists \eta_1\in(0,\eta) \text{ and } \exists s \in \mc{S}_{\eta_1} \text{ such that } \rp (s) > -\ln (2) + \delta.	
\end{equation}
It follows that we can build a sequence $(\eta_n)_{n \in \N}$ of the form \eqref{sequence-eta} and an associated sequence $(s_n)_{n \in \N}$ with $s_n \in \mc{S}_{\eta_n}$. For this sequence we deduce from \eqref{forall} that necessarily $\rp (\bar s) > - \ln (2) + \delta$, which is in contradiction with Lemma \ref{lemma2}. \qed

\begin{figure}[ht]
  \includegraphics[width=0.60\textwidth]{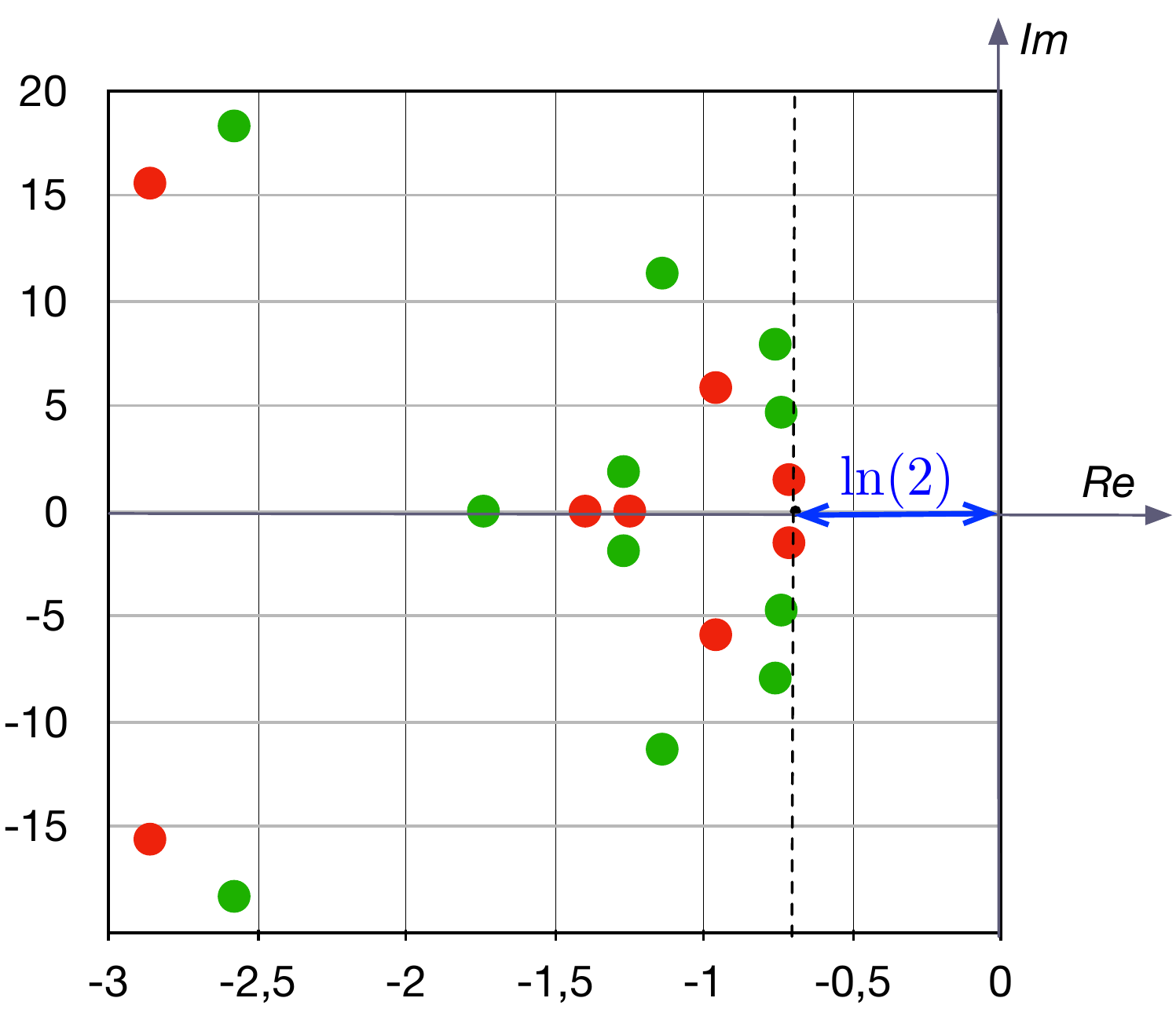}
  \centering
  \caption{The spectrum $\mc{S}_\eta$ of the closed loop system for $\eta = 0.02$ \red{•} and $\eta = 0.2$ \textcolor{fgreen}{•}.}
  \label{spectre-closed2}
\end{figure}

The theorem is illustrated in Figure \ref{spectre-closed2} where the spectrum is represented for the values $\eta$ = 0.02 and 0.2, and where the effectiveness of the stability margin $-\ln(2)$ can be appreciated.

\section{Robustness analysis} \label{sectionrobustness}

We consider again the control problem of the previous section. However, because we want to explicitly account for the sensitivity to delay uncertainties, we introduce an additional perturbation $\varepsilon$ to the nominal transport velocity $\upsilon = 1$. The open loop system is therefore written in the time domain as follows:
\begin{subequations} \label{epsys}
\begin{gather}
\p_t y_1(t,x) + ( 1 + \varepsilon) \p_x y_1(t,x) - \eta \p^2_{xx} y_1(t,x) = 0, \label{epsys1}\\[0.5em]
\p_t y_2(t,x) + ( 1 + \varepsilon) \p_x y_2(t,x) - \eta \p^2_{xx} y_2(t,x) = 0, \label{epsys2}\\[0.5em]
y_1(t,0) = y_2(t,1) + U(t), \label{bcepsys1}\\[0.5em]
y_2(t,0) = y_1(t,1), \label{bcepsys2}\\[0.5em]
\p_x y_1(t,1) = \p_x y_2(t,1) = 0 \label{bcepsys3}\\[0.5em]
Y(t) = y_1(t,1), \label{bcepsys4}
\end{gather}
\end{subequations}
We suppose, as before, that the system is closed with the dead beat output feedback controller
\begin{equation} \label{dbcontroleps}
U(t) = - Y(t-1).	
\end{equation}
Remark that this control law depends on the theoretical delay ($\tau = 1$), ignoring the uncertainty represented by $\varepsilon$.

Remark also that for simplicity we assume here that we have the same uncertainty $\varepsilon$ on both physical subsystems represented by transport equations \eqref{epsys1} and \eqref{epsys2}. This may seem like a simplification but, actually, it can be shown that this single perturbation, even if it is arbitrarily small,
is sufficient to destroy the closed loop stability when $\eta = 0$ (see Figure \ref{spectre-closed4} hereafter).

In this case the transfer functions of the two transport subsystems become
\begin{equation} \label{transfer-2}
f_{\eta,\varepsilon}(s)  = \dfrac{\lambda_1(s) - \lambda_2(s)}{\lambda_1(s) e^{-\lambda_2(s)} - \lambda_2(s) e^{-\lambda_1(s)}}.
\end{equation}
with the function $f_{\eta,\varepsilon}$ now depending on both $\eta$ and $\varepsilon$ because the functions $\lambda_1$ and $\lambda_2$ are modified as follows:
\begin{equation}
\lambda_1(s) = \dfrac{( 1 + \varepsilon) +\sqrt{( 1 + \varepsilon)^2 +4\eta s}}{2\eta}, \hd \lambda_2(s) = \dfrac{( 1 + \varepsilon) -\sqrt{( 1 + \varepsilon)^2 + 4\eta s}}{2\eta}.
\end{equation}
With this definition, the characteristic equation of the closed loop system is now
\begin{equation}
\label{charac4}
f_{\eta,\varepsilon}^2(s) - f_{\eta,\varepsilon}(s)e^{-s} - 1 = 0.
\end{equation}
As in the previous section, we introduce the spectrum $\mathcal{S}_{\eta,\varepsilon}$ and the maximal abscissa $\sigma_{\eta,\varepsilon}$ defined by
\begin{gather}
\mc{S}_{\eta,\varepsilon}=\{s\in\mathbb{C} : s\text{ is solution of } \eqref{charac4}\},\\[0.5em]
\sigma_{\eta,\varepsilon} = \sup\{\rp(s) : s \in \mc{S}_{\eta,\varepsilon} \}.
\end{gather}
We remark that, by definition, we have
\begin{equation}
	\mc{S}_{\eta,0} = \mc{S}_\eta \hd \text{ and } \hd \sigma_{\eta,0} = \sigma_\eta.
\end{equation}

We then have the following robustness theorem.
\begin{theorem}
\label{th2}
Let  $\delta>0$ and $\eta>0$ be such that Theorem \ref{thmln2} holds, i.e.
\begin{equation}
	\sigma_{\eta,0}\ls - \text{ln}(2) + \delta.
\end{equation}
Then there exists $\varepsilon_{1}>0$ such that for any
$\varepsilon\in(-\varepsilon_{1},\varepsilon_{1})$ the maximal spectral
abscissa $\sigma_{\eta,\varepsilon}$ satisfies
\begin{equation}
\label{eq:estimth2}
\sigma_{\eta,\varepsilon}\ls - \text{ln}(2)+2\delta
\end{equation}
\end{theorem}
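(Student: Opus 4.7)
The plan is to argue by contradiction, using joint continuity of the characteristic function
\[ F_{\eta,\varepsilon}(s):=f_{\eta,\varepsilon}^{2}(s)-f_{\eta,\varepsilon}(s)e^{-s}-1 \]
in $(\varepsilon,s)$ for fixed $\eta>0$, together with an a priori estimate that prevents zeros from escaping to infinity in the half-plane of interest. The underlying idea is just continuity of the spectrum under holomorphic perturbation, and all the nontrivial content sits in the asymptotic estimate: without the viscous term, a small change of $\varepsilon$ does move zeros arbitrarily far to the right (this is precisely the pathology identified in Section~\ref{sectioncontrolproblem}), so it is the regularising effect of $\eta>0$ that makes the argument work.

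First I fix $\eta$ as in Theorem~\ref{thmln2} and set $H:=\{s\in\mathbb{C} : \rp(s)\gs -\ln(2)+2\delta\}$. The branch point $s=-(1+\varepsilon)^{2}/(4\eta)$ of $\sqrt{(1+\varepsilon)^{2}+4\eta s}$ is real, negative, and uniformly bounded away from $H$ for $|\varepsilon|$ small, so the functions $\lambda_{1}(s)$, $\lambda_{2}(s)$, $f_{\eta,\varepsilon}(s)$ and $F_{\eta,\varepsilon}(s)$ are jointly continuous in $(\varepsilon,s)$ on $(-\varepsilon_{0},\varepsilon_{0})\times H$ and holomorphic in $s$ on $H$, for some $\varepsilon_{0}>0$.

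The key step is then to control $f_{\eta,\varepsilon}$ at infinity. For $|s|\to+\infty$ with $s\in H$, the bounded real part forces $\arg(s)\to\pm\pi/2$, so $\rp(\sqrt{(1+\varepsilon)^{2}+4\eta s})\sim\sqrt{2\eta|s|}\to+\infty$, giving $\rp(\lambda_{1}(s))\to+\infty$ and $\rp(\lambda_{2}(s))\to-\infty$. In the quotient \eqref{transfer-2} the denominator is therefore dominated by $\lambda_{1}(s)e^{-\lambda_{2}(s)}$ without any possibility of cancellation, which yields a bound of the form $|f_{\eta,\varepsilon}(s)|\ls C e^{-c\sqrt{|s|/\eta}}$ on $H$, uniformly in $\varepsilon$ close to $0$. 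Since $|e^{-s}|$ is bounded on $H$, this gives $F_{\eta,\varepsilon}(s)\to -1$ as $|s|\to+\infty$ in $H$ uniformly in $\varepsilon$, so there exists $R=R(\eta,\delta)>0$ such that $F_{\eta,\varepsilon}$ has no zero in $H\cap\{|s|\gs R\}$ for every sufficiently small $|\varepsilon|$.

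To conclude, assume for contradiction that \eqref{eq:estimth2} fails: there exist $\varepsilon_{n}\to 0$ and $s_{n}\in\mc{S}_{\eta,\varepsilon_{n}}$ with $\rp(s_{n})>-\ln(2)+2\delta$. By the previous step $(s_{n})$ is bounded, so up to extraction $s_{n}\to\bar s$ with $\rp(\bar s)\gs -\ln(2)+2\delta$; joint continuity then yields $F_{\eta,0}(\bar s)=0$, i.e.\ $\bar s\in\mc{S}_{\eta}$, contradicting Theorem~\ref{thmln2} because $\rp(\bar s)\gs -\ln(2)+2\delta>-\ln(2)+\delta\gs\sigma_{\eta}$. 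I expect the main technical obstacle to be the asymptotic estimate on $f_{\eta,\varepsilon}$: it is essentially a calculation from the explicit formulas for $\lambda_{1,2}$, but one must be careful that the decay is genuinely uniform in $\varepsilon$ near zero, and one should notice that the rate $c\sqrt{|s|/\eta}$ degrades as $\eta\to 0^{+}$, which is harmless here since $\eta$ is fixed in Theorem~\ref{th2}, but indicates that any uniform-in-$\eta$ robustness result would need a different argument.
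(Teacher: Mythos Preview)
Your proof is correct and follows essentially the same approach as the paper's: both argue by contradiction, show that putative poles $s_n$ with $\rp(s_n)>-\ln(2)+2\delta$ cannot escape to infinity because $f_{\eta,\varepsilon}(s)\to 0$ there (so $F_{\eta,\varepsilon}(s)\to -1\neq 0$), and then pass to the limit along a bounded subsequence to contradict Theorem~\ref{thmln2}. The only differences are organizational --- you package the large-$|s|$ behaviour as a uniform-in-$\varepsilon$ a priori bound before extracting the sequence, whereas the paper splits into bounded/unbounded cases after extraction --- and one small imprecision: on $H$ the real part is only bounded \emph{below}, so $\arg(s)$ need not tend to $\pm\pi/2$ when $|s|\to\infty$, but your conclusion $\rp\big(\sqrt{(1+\varepsilon)^2+4\eta s}\big)\to+\infty$ remains valid (indeed easier) in the remaining case $\rp(s)\to+\infty$.
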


\begin{proof}
We proceed by contradiction and we assume that Theorem \ref{th2} does not hold. This implies that for any $\varepsilon_{1}>0$ there exists $\varepsilon\in (-\varepsilon_{1}, \varepsilon_{1})$ such that
\begin{equation}
\sigma_{\eta,\varepsilon}> - \text{ln}(2)+2\delta,
\end{equation}
and therefore that there exists $s\in \mathcal{S}_{\eta,\varepsilon}$ such that
\begin{equation}
\label{contsigma}
\rp(s)> - \text{ln}(2)+2\delta.
\end{equation}
Hence we can define a sequence
\begin{equation} \label{sequence-eta-2}
	(\varepsilon_n)_{n \in \N} \hd \text{with}  \hd \lim_{n \rightarrow +\infty} \varepsilon_n = 0,
\end{equation}
and an associated sequence $(s_n)_{n \in \N}$ such that
\begin{equation} \label{sequence-s-2}
	\rp(s_n) > - \text{ln}(2)+2\delta, \;\; \text{with} \;\; s_n\in \mathcal{S}_{\eta,\varepsilon_n} \; \forall n \in \N.
\end{equation}
\begin{itemize}
\item If the sequence $(s_{n})_{n\in\mathbb{N}}$ is bounded, then we can extract a subsequence that converges to a limit $\bar{s}\in\mathbb{C}$. We still denote this subsequence by $(s_{n})_{n\in\mathbb{N}}$. As $\varepsilon_{n}\rightarrow 0$ we can pass to the limit in \eqref{charac4} and deduce that $\bar{s}\in \mathcal{S}_{\eta,0}=\mathcal{S}_{\eta}$ and therefore, from the assumption on $\delta$ and $\eta$, that
\begin{equation}
\label{contsigma2}
\rp(\bar{s})\ls - \text{ln}(2)+\delta.
\end{equation}
On the other hand, again passing to the limit, we deduce from \eqref{sequence-s-2} that
\begin{equation}
\rp(\bar{s})\gs - \text{ln}(2)+2\delta>- \text{ln}(2)+\delta,
\end{equation}
which is in contradiction with \eqref{contsigma2}.
\item If the sequence $(s_{n})_{n\in\mathbb{N}}$ is unbounded, then we can extract a subsequence such that $|s_{n}|\rightarrow +\infty$. We still denote this subsequence by $(s_{n})_{n\in\mathbb{N}}$. We denote $s_{n} = r_{n}e^{i\theta_{n}}$, with $r_{n}\in\mathbb{R}_{+}$ and $\theta_{n}\in[-\pi,\pi)$, for $n\in\mathbb{N}$. Since $r_{n} =|s_{n}|\rightarrow +\infty$ and  \eqref{sequence-s-2} holds, we deduce that for $n$ sufficiently large $\theta_{n}\in (-5\pi/8,5\pi/8)$. Denoting $(1+\varepsilon_{n})+4\eta s_{n} = r_{n}^{1}e^{i\theta_{n}^{1}}$ with $r_{n}^{1}\in\mathbb{R}_{+}$ and $\theta^1_{n}\in[-\pi,\pi)$ we deduce that for $n$ sufficiently large (depending on $\eta$), $r_{n}^{1}\rightarrow +\infty$ and $\theta_{n}^{1}\in(-3\pi/4,3\pi/4)$. Thus
\begin{equation}
\sqrt{(1+\varepsilon_{n})+4\eta s_{n}}= \sqrt{r_{n}^{1}}e^{i\theta_{n}^{1}/2},
\end{equation}
where $\theta_{n}^{1}/2\in (-3\pi/8, 3\pi/8)$, which implies that $\rp(\sqrt{1+4\eta s_{n}})\rightarrow +\infty$. From the definition of $\lambda_{1}$ and $\lambda_{2}$, we deduce that
\begin{equation}
\label{eq:limitlambda}
\rp(\lambda_{1}(s_{n}))\rightarrow +\infty,\;\;\rp(\lambda_{2}(s_{n}))\rightarrow -\infty,
\end{equation}
and
\begin{equation}
f_{\eta,\varepsilon_{n}}=2\left[(\frac{(1+\varepsilon_{n})}{\sqrt{(1+\varepsilon_{n})+4\eta s_{n}}}+1)e^{-\lambda_{2}(s_{n})}+(\frac{(1+\varepsilon_{n})}{\sqrt{(1+\varepsilon_{n})+4\eta s_{n}}}-1)e^{-\lambda_{1}(s_{n})}\right]^{-1}.
\end{equation}
Using \eqref{eq:limitlambda} and observing that $(\frac{(1+\varepsilon_{n})}{\sqrt{(1+\varepsilon_{n})+4\eta s_{n}}}+1)\rightarrow 1$, we obtain that $f_{\eta,\varepsilon_{n}}\rightarrow 0$. Moreover, from \eqref{sequence-s-2}, $e^{-s_{n}}$ is bounded. Hence, we deduce that
\begin{equation}
f_{\eta,\varepsilon_n}^2(s_n) - f_{\eta,\varepsilon_n}(s_n)e^{-s_n} - 1 \longrightarrow -1
\end{equation}
and, from  \eqref{charac4}, we obtain again a contradiction.
\end{itemize}
In both cases we obtain a contradiction, which means that there exists $\varepsilon_{1}>0$ such that \eqref{eq:estimth2} holds for any $\varepsilon\in (-\varepsilon_{1},\varepsilon_{1})$. This concludes the proof of Theorem \ref{th2}.
\end{proof}
\begin{figure}[h]
  \includegraphics[width=0.65\textwidth]{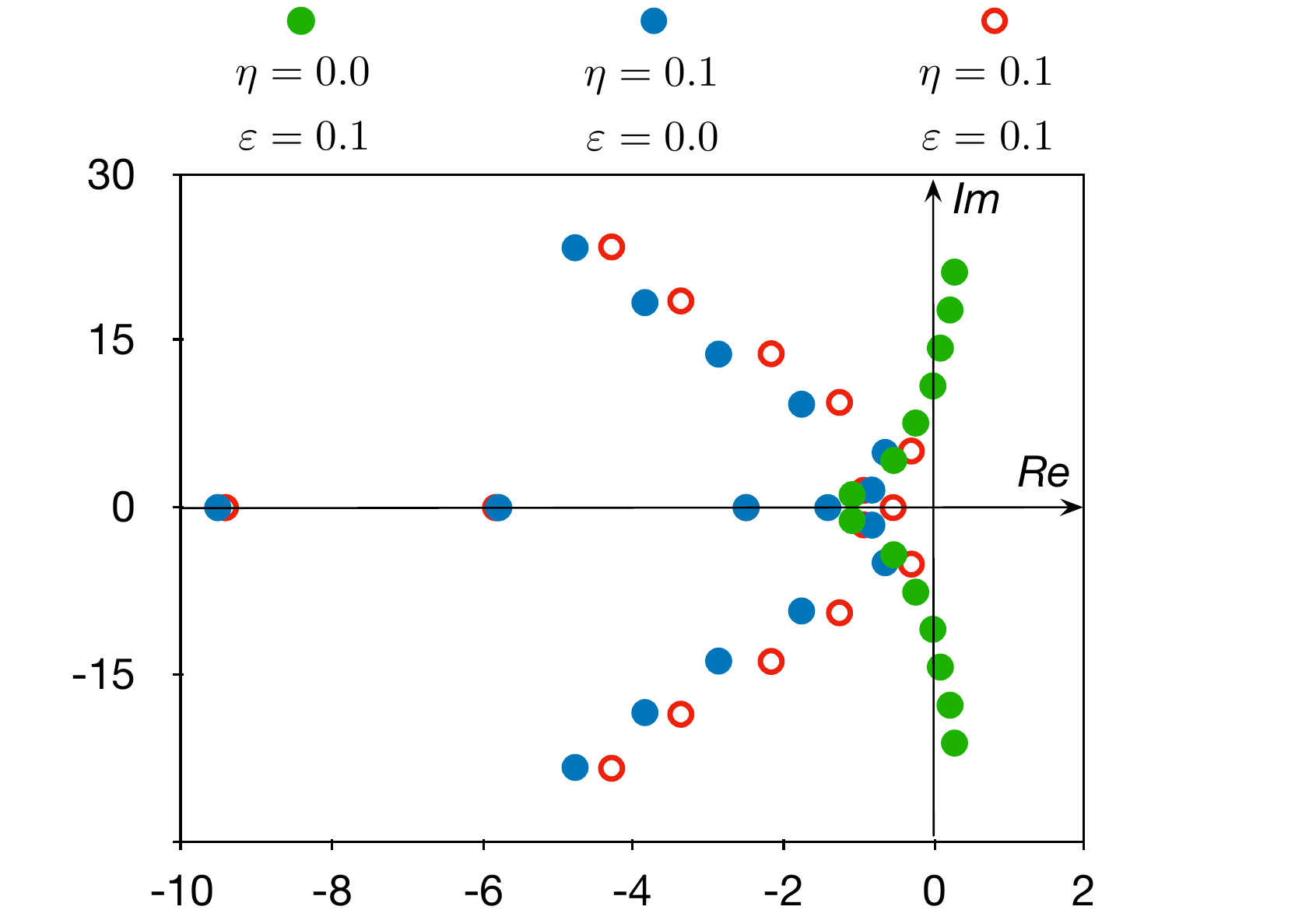}
  \centering
  \caption{The spectrum $\mc{S}_{\eta, \varepsilon}$ of the closed loop system: influence of viscosity on  stability in case of delay uncertainty.}
  \label{spectre-closed4}
\end{figure}
The theorem is illustrated in Figure \ref{spectre-closed4}. In this figure, we can see what happens in the situation where there is no diffusion ($\eta = 0$) but a slight uncertainty ($\varepsilon = 0.1$) of the transport velocity: $\upsilon = 1 + \varepsilon = 1.1$ instead of $\upsilon = 1$. Although the ideal system (without modelling uncertainty) should be exponentially stable, it appears that it becomes unstable with poles (represented by green dots in Figure \ref{spectre-closed4}) moving to the right-half complex plane.

In contrast, when there is some diffusion ($\eta = 0.1$) and no uncertainty ($\varepsilon = 0$), we know from Theorem \ref{thmln2} that the closed loop system must be stable as it can be seen with the spectrum of blue dots (actually reprinted from Figure \ref{spectre-closed}) which is entirely strictly located in the left half plane.

Then, illustrating Theorem \ref{th2}, the robustness of the control in presence of diffusion is clearly evidenced by the spectrum made up of red dots which results from a small shift of the initial blue spectrum but remains entirely in the left half plane.

\section{Comparison with a simpler system} \label{sectionscalar}

In this section, we consider the case of a system which is very close to  the previous one but slightly simpler. Somewhat surprisingly, we will see that in this case the addition of a diffusion term does not seem to strengthen the system stability but instead destroys the stability.

We start with an ideal system without diffusion nor modelling uncertainty which is clearly a simplified form of the physical system \eqref{closinvisys} that we have considered in Section \ref{sectioncontrolproblem} and is written as follows:
\begin{subequations} \label{scalarsys}
\begin{gather}
\p_t y(t,x) + \p_x y(t,x) = 0, \label{scalarsys1}\\[0.5em]
\p_t \hat y(t,x) + \p_x \hat y(t,x) = 0, \label{scalarsys2}\\[0.5em]
\bpm y(t,0) \\[0.5em] \hat y(t,0) \epm = \underbrace{\bpm  1 & - 1 \\[0.5em] 1 & -1  \epm }_{\displaystyle \mbK} \bpm y(t,1) \\[0.5em] \hat y(t,1) \epm.
\end{gather}
\end{subequations}
In the frequency domain, 
 the characteristic equation reduces to $e^{2s} = 0$, meaning that the system is exponentially stable (for any decay rate). In fact one can observe easily that the system is finite time stable: for any time $t\geq 1$, $\hat{y}(t,\cdot)=y(t,\cdot)$ on $[0,1]$, thus from the boundary condition at $x=0$, $y(t,0)=0$, which means that $y(t,\cdot) \equiv 0$ for $t\geq 2$. However, in this case also, the exponential stability is not robust w.r.t. to delay inaccuracy because $\bar \rho(\mbK) = \sqrt{2}$.

On the basis of our previous results in this paper, it seems natural to conjecture that the addition of a diffusion term in equation \eqref{scalarsys1} should allow to strengthen the system stability.  To address this issue, the system dynamics  \eqref{scalarsys} are modified with an additional diffusion parameter $\eta$ as follows:
\begin{subequations} \label{clscalarsys}
\begin{gather}
\p_t y(t,x) + \p_x y(t,x) - \eta \p_{xx} y(t,x)= 0, \label{clscalarsys1}\\[0.5em]
\p_t \hat y(t,x) + \p_x \hat y(t,x) = 0, \label{clscalarsys2}\\[0.5em]
y(t,0) = y(t,1) - \hat y(t,1), \label{bcclscalarsys1}\\[0.5em]
\hat y(t,0) = y(t,0), \label{bcclscalarsys2}\\[0.5em]
\p_x y(t,1) = 0. \label{bcclscalarsys3}
\end{gather}
\end{subequations}
For this system in the frequency domain, after calculations similar to those in Sections \ref{sectionopenloop} and \ref{sectionclosedloop}, it can be shown that the characteristic equation is
\begin{equation}
F_\eta(s) - 1 = 0 \hd \text{with} \hd F_\eta(s) = \dfrac{\left(\lambda_{1}(s)e^{\lambda_{1}(s)}-\lambda_{2}(s)e^{\lambda_{2}(s)}\right)(1+e^{-s})}{(\lambda_{1}(s)-\lambda_{2}(s))e^{\lambda_{1}(s)+\lambda_{2}(s)}},
\end{equation}
where $\lambda_1(s)$ and $\lambda_2(s)$ are given by \eqref{lambda12} and repeated here for convenience:
\begin{equation}
\lambda_1(s) = \dfrac{1 +\sqrt{1 + 4 \eta s}}{2\eta}, \hd \lambda_2(s) = \dfrac{1 -\sqrt{1 + 4 \eta s}}{2\eta}.
\end{equation}
For a given value of $\eta$, as in Section \ref{sectionclosedloop}, we use the following notations for the spectrum and the maximal spectral abscissa:
\begin{gather}
	\mc{S}_\eta = \{ s\in\mathbb{C} : F_\eta(s) - 1 = 0\}, \\[0.5em]
	\sigma_\eta = \sup\{\rp(s) : s \in \mc{S}_\eta \}.
\end{gather}
We then have the surprising observation that a slight diffusion in the system has, in this case, a clear destabilizing effect. This is graphically illustrated in Figure \ref{spectre5} and leads to the following conjecture.

\begin{conjecture} \label{th3}
For all $\epsilon > 0$, there exists $\eta_{1}>0$ such that for all $\eta\in(0,\eta_{1})$ the maximal spectral abscissa  satisfies the inequality $\sigma_\eta > - \epsilon$.
\qed
 \end{conjecture}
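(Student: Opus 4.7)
\noindent\textbf{Proof proposal for Conjecture \ref{th3}.}

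My plan is to exhibit, for every small enough $\eta>0$, an explicit \emph{approximate} pole of the closed loop system whose real part can be made as close to $0$ as desired by choosing a suitable integer index, and then to promote it to a genuine pole by a local holomorphic perturbation argument. The index is chosen as a function of $\eta$ and $\epsilon$ at the end.

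First I would simplify the characteristic equation. Setting $z=\sqrt{1+4\eta s}$ and $w:=z-1$, so that $s=w(2+w)/(4\eta)$, a direct manipulation of $F_\eta(s)=1$ (using $\lambda_{1}+\lambda_{2}=1/\eta$ and $\lambda_{1}-\lambda_{2}=z/\eta$) gives
\[
(1+e^{-s})\Bigl[(2+w)e^{w/(2\eta)}+w\,e^{-(2+w)/(2\eta)}\Bigr]=2(1+w).
\]
On any bounded set of $w$, $e^{-(2+w)/(2\eta)}$ is uniformly $O(e^{-1/\eta})$. Exploiting the identity $e^{-s}e^{w/(2\eta)}=e^{-w^2/(4\eta)}$ (immediate from $s=w(2+w)/(4\eta)$), the equation reduces, in the regime $|w|\to 0$, to
\[
\Phi_\eta(w):=e^{w/(2\eta)}+e^{-w^2/(4\eta)}-1-\tfrac{w}{2}+O(|w|^{2})+O(e^{-1/\eta})=0.
\]

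Next I would solve the leading-order equation with the ansatz $w_n=2\eta\alpha_n+2in\pi\eta$ for a positive \emph{odd} integer $n$ and a real parameter $\alpha_n$. With this choice $w_n/(2\eta)=\alpha_n+in\pi$, so $e^{w_n/(2\eta)}=-e^{\alpha_n}$ is exactly real, while $-w_n^{2}/(4\eta)=n^2\pi^2\eta-\eta\alpha_n^{2}-2in\pi\eta\alpha_n$, so $e^{-w_n^2/(4\eta)}=e^{n^2\pi^2\eta}(1+O(\sqrt{\eta}))$ as long as $n\sqrt{\eta}$ stays bounded. Matching the leading real part yields $-e^{\alpha_n}+e^{n^2\pi^2\eta}=1$, which forces $\alpha_n=\ln(e^{n^2\pi^2\eta}-1)$, and a computation of $s_n=w_n(2+w_n)/(4\eta)$ then gives
\[
s_n=\ln\!\bigl(1-e^{-n^2\pi^2\eta}\bigr)+in\pi+O(\sqrt{\eta}),
\]
whose real part is negative but tends to $0^{-}$ as $n^2\pi^2\eta\to+\infty$. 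Since $\Phi_\eta$ is holomorphic in $w$, I would promote $w_n$ to an exact root via the implicit function theorem: the dominant part of $\partial_w\Phi_\eta(w_n)$ is $(1/(2\eta))e^{w_n/(2\eta)}$ of magnitude $\sim e^{\alpha_n}/\eta$, while the residual $\Phi_\eta(w_n)$ is of magnitude $\sim\sqrt{\eta}\,e^{\alpha_n}$, so the Newton correction is $O(\eta^{3/2})$ in $w$ and $O(\sqrt{\eta})$ in $s$. Given $\epsilon>0$, choosing $n=n(\eta)$ to be the smallest positive odd integer with $n^2\pi^2\eta\geq -\ln(1-e^{-\epsilon/2})$ gives $n(\eta)=O(1/\sqrt{\eta})$, keeps the ansatz in its regime of validity, and produces a genuine pole $\tilde{s}_{n(\eta)}$ with $\mathrm{Re}(\tilde{s}_{n(\eta)})=-\epsilon/2+O(\sqrt{\eta})>-\epsilon$ for $\eta$ small enough, whence $\sigma_\eta>-\epsilon$.

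The main obstacle is controlling the perturbation step uniformly in $n$. Because $e^{w/(2\eta)}$ oscillates in $\mathrm{Im}(w)$ with period $2\pi\eta$, any disk on which Rouch\'e's theorem is applied must have radius much smaller than $\eta$ in the imaginary direction to avoid phase wrap-around, yet be large enough in the real direction that the linear term dominates the quadratic remainder; moreover, every constant must be tracked uniformly as $n\to\infty$ and $\eta\to 0^+$ simultaneously, and one must rule out possible cancellations between the constructed pole and other nearby branches that the reduction has discarded. Making these scale-matched estimates rigorous is the delicate technical point, which is presumably why the authors state the result as a conjecture rather than a theorem.
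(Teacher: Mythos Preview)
The paper does not prove Conjecture~\ref{th3}: it is stated as a conjecture, supported only by the numerical spectrum in Figure~\ref{spectre5}, so there is no proof in the paper to compare your attempt against.

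That said, your proposal is a plausible route to a genuine proof and goes well beyond what the paper offers. Your reduction of $F_\eta(s)=1$ to $(2+w)\bigl[e^{w/(2\eta)}+e^{-w^2/(4\eta)}\bigr]=2(1+w)$ via $w=\sqrt{1+4\eta s}-1$ is correct and mirrors the change of variables the paper uses for Theorem~\ref{thmln2}. The leading-order balance $-e^{\alpha_n}+e^{n^2\pi^2\eta}=1$ and the resulting asymptotic $\rp(s_n)\approx\ln(1-e^{-n^2\pi^2\eta})$ give a transparent mechanism for the conjecture: fixing $n^2\pi^2\eta$ of order one pins a pole at a bounded negative abscissa, and letting $n^2\pi^2\eta$ grow drives that pole toward the imaginary axis.

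Your scale accounting for the perturbation step also checks out. With $|w_n|=O(\sqrt\eta)$ one has $|\Phi_\eta(w_n)|=O(\sqrt\eta)$, while $\partial_w\Phi_\eta(w_n)\sim -(2\eta)^{-1}e^{\alpha_n}$ is of size $\sim 1/\eta$ and dominates the competing contribution $-(w_n/2\eta)e^{-w_n^2/(4\eta)}=O(\eta^{-1/2})$. A Rouch\'e comparison on a disk of radius $r$ with $\eta^{3/2}\ll r\ll \eta$ (say $r=\eta^{5/4}$) then succeeds; on such a disk $|h/(2\eta)|=O(\eta^{1/4})$, so the phase-wrap concern you flag does not actually bite. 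What remains is careful bookkeeping (carrying the exact equation rather than $\Phi_\eta$, and tracking constants uniformly in $n$ and $\eta$) rather than a missing idea, so your outline looks like a viable path to turning the paper's conjecture into a theorem.
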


\begin{figure}[h]
  \includegraphics[width=0.50\textwidth]{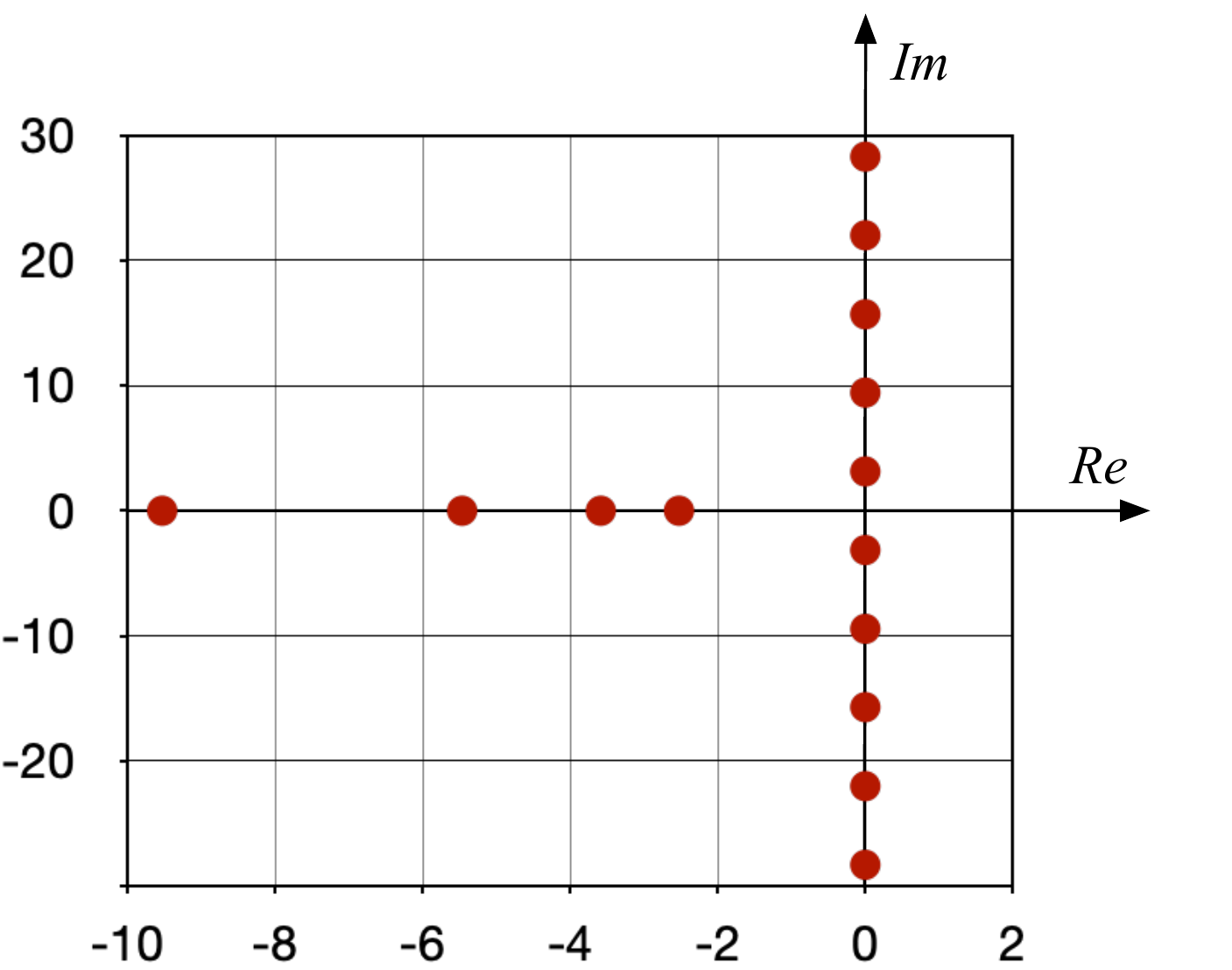}
  \centering
  \caption{The spectrum $\mc{S}_{\eta}$ of the system \eqref{clscalarsys} with $\eta = 0.1$.}
  \label{spectre5}
\end{figure}

\section{Conclusion} \label{sectionconclusion}

We have discussed the output feedback stabilization of an unstable open loop system which is made up of two interconnected transport equations and provided with anti-located boundary sensing and actuation. We have shown that the system can be stabilized by a dynamic controller that involves a delayed output feedback which turns out to be non-robust with respect to delay uncertainties. Then we have shown that the designed control law can however stabilize the system in a robust way when there is a small unknown diffusion in the plant.\\

Our work in progress on this topic \cite{BasCorHay22a} will be focused on the output feedback stabilization of the motion of a viscous fluid represented by $2\times 2$ hyperbolic PDEs when the control input is the flow rate at one boundary while the measurable output is the fluid density at the other boundary. There is, in this case, an important difference which lies in how viscosity affects the model, inducing a distributed internal coupling between the two partial differential equations. This implies that the system can no longer be considered as a feedback interconnection of two independent scalar transport equations which may lead to additional difficulties for the stability analysis.

\section*{Acknowledgement}
The authors would like to thank the Bernoulli center for fundamental studies (CIB) and Joachim Krieger for their hospitality and their support, as well as the projet PEPS JCJC 2022 of CNRS - INSMI.

\appendix
\section{Computation of $\bar \rho(\mbK) \gs 1$.} \label{appendixrho}

In this appendix we consider the matrix $\mbK$ defined in equation \eqref{closinvisysbc} as follows:
\begin{equation}
	\mbK = \bpm -2k_1 & 1 & - k_2 \\[0.5em] 1 & 0 & 0 \\[0.5em]\ 1 & 0 & 0\epm.
\end{equation}
 From \cite[Proposition 3.7]{CorBasdAn08} and \cite[Theorem 3.12]{BasCor14}, we have
\begin{equation}\label{barrhorho2}
  \bar \rho(K)=\rho_2(K)
\end{equation}
with
\begin{align}
\rho_2(\mb{K}) &:= \inf \{ \Vert D\mbK D^{-1} \Vert_2 ; D \text{ is a positive diagonal matrix} \} \\[0.5em]
&= \inf \left\{ \sqrt{\lambda_{\max} \big[ \big(D \mb{K} D^{-1}\big)^T \big(D \mb{K} D^{-1}\big)\big]}; D \text{ is a positive diagonal matrix} \right\}.
\end{align}
With the (normalized) matrix
\begin{equation}
	D = \bpm 1 & 0 & 0\\ 0 & \theta_2 & 0 \\0 & 0 & \theta_3 \epm, \hd \theta_2 > 0, \;\theta_3 > 0,
\end{equation}
we have
\begin{equation}
\mathcal{M} = \big(D \mb{K} D^{-1}\big)^T \big(D \mb{K} D^{-1}\big) = \bpm 4k_1^2 + \theta_2^2 + \theta_3^2 & -2k_1 \theta_2^{-1} & 2 k_1 k_2 \theta_3^{-1} \\[0.5em] -2k_1 \theta_2^{-1} & \theta_2^{-2} & -k_2 \theta_2^{-1} \theta_3^{-1} \\[0.5em] 	2 k_1 k_2 \theta_3^{-1} & -k_2 \theta_2^{-1} \theta_3^{-1} & k_2^2 \theta_3^{-2} \epm.
\end{equation}
Then we have
\begin{equation} \label{polynomial}
\det\big(\lambda \mb{I}_3 - \mathcal{M}\big) = \lambda (\lambda^2 - \beta \lambda + \gamma)
\end{equation}
with
\begin{align}
&\beta = 4k_1^2 + (\theta_2^2 + \theta_2^{-2}) + (\theta_3^2 + k_2^2 \theta_3^{-2}), \label{defbeta} \\[0.5em]
&\gamma = 1 + \theta_2^{-2} \theta_3^2 + k_2^2 + k_2^2 \theta_2^2 \theta_3^{-2}. \label{defgamma}
\end{align}
From \eqref{polynomial} we see that the eigenvalues of the matrix $\mathcal{M}$ are
\begin{equation}
\lambda = 0 \hd \text{and} \hd \lambda = \dfrac{\beta \pm \sqrt{\beta^2 - 4 \gamma}}{2}.	
\end{equation}
From \eqref{defbeta} we know that $\beta \gs 0$. Moreover, $\beta^2 - 4 \gamma \gs 0$ because the matrix $\mathcal{M}$ is symmetric. It follows that
\begin{align}
\rho_2(\mbK) = \inf_{\theta_2, \theta_3} \sqrt{	\dfrac{\beta + \sqrt{\beta^2 - 4 \gamma}}{2}}.
\end{align}
From \eqref{defbeta} and \eqref{defgamma}, after some computations, we get
\begin{multline}
\beta^2 - 4 \gamma = 16\,k_1^4 + 8k_1^2(\theta_2^2 + \theta_2^{-2}) + 8k_1^2(\theta_3^2 + k_2^2 \theta_3^{-2}) + (\theta_2^4 + \theta_2^{-4}) + (\theta_3^4 + k_2^4 \theta_3^{-4})	\\[0.5em] + 2 (\theta_2^2 \theta_3^2 + k_2^2 \theta_2^{-2} \theta_3^{-2}) - 2 - 2k_2^2 - 2 (\theta_2^{-2} \theta_3^2 +  k_2^2\theta_2^2 \theta_3^{-2}).
\end{multline}
From \eqref{defbeta} and this latter expression it can be verified that $\beta + \sqrt{\beta^2 - 4 \gamma}$ is minimal if and only if
\begin{equation}
	\theta_2^2 = 1 \hd \text{and} \hd \theta_3^2 = |k_2|.
\end{equation}
With these values we then have
\begin{align}
	\rho_2(\mb{K}) &= \inf_{\theta_2, \theta_3} \sqrt{	\dfrac{\beta + \sqrt{\beta^2 - 4 \gamma}}{2}} \\[0.5em]
	&= |k_1| + \sqrt{1 + k_1^2 + |k_2|} \;\; \gs 1 \hd \text{for all} \; (k_1, k_2).
\end{align}

\section{Proof of Lemma \ref{lemma1}.}
\label{app_proof-lemma-1}

\noi \textbf{Part a)}

Assume by contradiction that $\rp (\bar z^2) \in (1, +\infty]$. Then
\begin{equation}
	e^{-(\rp (z_n^2) - 1)/4\eta_n} \longrightarrow 0
\end{equation}
and it follows that the right-hand side of \eqref{eq22} converges to $\pm 2$. Then, denoting $a_n = \rp (z_n)$ and $b_n = \ip (z_n)$, \eqref{eq22} implies
\begin{equation}
	\label{contralem21}
\frac{1}{2}\left(\left(\frac{a_n-ib_n}{a_n^{2}+b_n^{2}}+1\right)e^{-(1-a_n-ib_n)/2\eta_n}-
\left(\frac{a_n-ib_n}{a_n^{2}+b_n^{2}}-1\right)e^{-(1+a_n+ib_n)/2\eta_n}\right)\rightarrow \pm 1.
\end{equation}
Since $\rp (\bar z^2) \in (1, +\infty]$ by assumption, it follows that there exists a positive constant $c$ such that $a_n^2 > c + 1 + b_n^2$ for $n$ sufficiently large, and in particular that $|\rp (\bar z)| >1$. Let us consider successively the two possibilities $\rp (\bar z) > 1$ and $\rp (\bar z) < -1$.\\

\noi \textbf{The case} $\mathbf{\rp (\bar z) > 1.}$

In this case, if $n$ is sufficiently large, we have
\begin{equation}
\text{Re}\left(\frac{a_n-ib_n}{a_n^{2}+b_n^{2}}+1\right)\geq 1 \hd
\text{and} \hd e^{(a_n-1)/2\eta_n}\longrightarrow +\infty.
\end{equation}
Thus
\begin{equation}
\left|\frac{1}{2}\left(\frac{a_n-ib_n}{a_n^{2}+b_n^{2}}+1\right)e^{(a_n-1)/2\eta_n}\right|\longrightarrow +\infty,
\end{equation}
while
\begin{equation}
\frac{1}{2}\left(\frac{a_n-ib_n}{a_n^{2}+b_n^{2}}-1\right)e^{(-a_n-1)/2\eta_n}e^{-ib_n/2\eta_n}\longrightarrow 0.
\end{equation}
This implies
\begin{equation}
\label{cont-lim-infty}
\left|\frac{1}{2}\left(\frac{a_n-ib_n}{a_n^{2}+b_n^{2}}+1\right)e^{(a_n-1)/2\eta_n}e^{ib_n/2\eta_n}-
\frac{1}{2}\left(\frac{a_n-ib_n}{a_n^{2}+b_n^{2}}-1\right)e^{-(1+a_n+ib_n)/2\eta_n}\right|\longrightarrow +\infty,
\end{equation}
which is in contradiction with \eqref{contralem21}.\\

\noi \textbf{The case} $\mathbf{\rp (\bar z) < - 1.}$

Similarly in this case we have
\begin{equation}
e^{(-a_n-1)/2\eta_n} \longrightarrow +\infty  \hd \text{and} \hd \left|\left(\frac{a_n-ib_n}{a_n^{2}+b_n^{2}}-1\right)e^{(-a_n-1)/2\eta_n}e^{-ib_n/2\eta_n}\right|\longrightarrow +\infty,
\end{equation}
while
\begin{equation}
\label{eq:contlem9}
\left|\frac{1}{2}\left(\frac{a_n-ib_n}{a_n^{2}+b_n^{2}}+1\right)e^{(a_n-1)/2\eta_n}e^{ib_n/2\eta_n}\right| \longrightarrow 0.
\end{equation}
So we get again \eqref{cont-lim-infty} and a contradiction with \eqref{contralem21}. This concludes the proof of Lemma \ref{lemma1}, Part a).\\

\noi \textbf{Part b)}

We assume that
\begin{equation}
\label{rez2=1}
\rp (\bar z^2) = 1.
\end{equation}
We have
\begin{equation}
\rp (\bar z^2) = \big(\rp (\bar z) \big)^{\! 2} - \big(\ip (\bar z) \big)^{\! 2}.
\end{equation}
 From this expression and \eqref{rez2=1},  either $|\rp (\bar z)| = 1$ and $\ip (\bar z) = 0$, or $|\rp (\bar z)| > 1$.

We shall show by contradiction that $|\rp (\bar z)| > 1$ is actually not possible. Let us thus assume that $\rp (\bar z) > 1$ (the case $\rp (\bar z) < - 1$ can be easily handled by symmetry). In that case, using again the notations $a_n = \rp (z_n)$ and $b_n = \ip (z_n)$ and multiplying both sides of \eqref{eq22} by $e^{(1-a_n)/4\eta_n}$, we obtain:
\begin{equation}
\label{expnrezgrand}
\begin{split}
&\left(\frac{a_n-ib_n}{a_n^{2}+b_n^{2}}+1\right)e^{(a_n-1)/4\eta_n + ib_n/2\eta_n}-\left(\frac{a_n-ib_n}{a_n^{2}+b_n^{2}}-1\right)e^{-(1+3a_n+2ib_n)/4\eta_n}\\[0.5em]
&= -e^{(2-a_n^2+b_n^2-a_n-2ia_nb_n)/4\eta_n}\pm\sqrt{e^{(2-a_n^2+b_n^2-a_n-2ia_nb_n)/2\eta_n}+4e^{(1-a_n)/2\eta_n}}.
\end{split}
\end{equation}
Since
\begin{equation}
\label{an>1}
1-a_n \rightarrow 1- \rp (\bar z) < 0
\end{equation}
 and, by \eqref{rez2=1},   $2-a_n^2+b_n^2-a_n \rightarrow 1- \rp (\bar z) < 0$, the right-hand side of \eqref{expnrezgrand} converges to 0 when $\eta_n \rightarrow 0^{+}$. This implies that the left-hand side
of \eqref{expnrezgrand} also converges to 0, namely
\begin{equation}
\left(\frac{a_n-ib_n}{a_n^{2}+b_n^{2}}+1\right)e^{(a_n-1)/4\eta_n + ib_n/2\eta_n}-\left(\frac{a_n-ib_n}{a_n^{2}+b_n^{2}}-1\right)e^{-(1+3a_n+2ib_n)/4\eta_n}\rightarrow 0.
\end{equation}
Then, since the $e^{-(1+3a_n+ib_n)/2\eta_n} \rightarrow 0$, we should have
\begin{equation}
\label{eq:conlem2220}
\left|\left(\frac{a_n-ib_n}{a_n^{2}+b_n^{2}}+1\right)\right|e^{(a_n-1)/4\eta_n}\rightarrow 0,
\end{equation}
but it can be shown that this is impossible. Indeed, by \eqref{an>1}, we know that $a_n > 0$ if $n$ is large enough, which implies that
\begin{equation}
\label{ineqan}
\left|\left(\frac{a_n-ib_n}{a_n^{2}+b_n^{2}}+1\right)\right|e^{(a_n-1)/4\eta_n} \gs e^{(a_n-1)/4\eta_n}.
\end{equation}
From \eqref{an>1}, $e^{\frac{(a_n-1)}{4\eta}}\rightarrow +\infty$
which leads to a contradiction with \eqref{eq:conlem2220} and \eqref{ineqan}.\\

\noi This concludes the proof of Lemma \ref{lemma1}. \qed

\end{document}